\date{\today}
\theoremstyle{plain}
\newtheorem{thm}{Theorem}[section]
\newtheorem{lem}[thm]{Lemma}
\newtheorem{prop}[thm]{Proposition}
\theoremstyle{definition}
\theoremstyle{remark}
\numberwithin{equation}{section}
\renewcommand{\u}{{\mathbf u}}
\renewcommand{\v}{{\mathbf{v}}}
\renewcommand{\H}{\mathbf{H}}
\newcommand{\w}{{\mathbf w}}
\newcommand{\R}{{\mathbb R}}
\newcommand{\U}{{\mathbf U}}
\newcommand{\W}{{\mathbf W}}
\newcommand{\dv}{{\rm div }}
\newcommand{\cu}{{\rm curl\, }}
\newcommand{\E}{{\mathcal E}}
\begin{document}

\title[Incompressible limit of  compressible non-isentropic MHD equations]
{Incompressible limit of the compressible non-isentropic
  magnetohydrodynamic equations with zero magnetic diffusivity}

\author{Song Jiang}
\address{LCP, Institute of Applied Physics and Computational Mathematics, P.O.
 Box 8009, Beijing 100088, P.R. China}
 \email{jiang@iapcm.ac.cn}

\author{Qiangchang Ju}
\address{Institute of Applied Physics and Computational Mathematics, P.O.
 Box 8009-28, Beijing 100088, P.R. China}
 \email{qiangchang\_ju@yahoo.com}

 \author[Fucai Li]{Fucai Li}
\address{Department of Mathematics, Nanjing University, Nanjing
 210093, P.R. China}
 \email{fli@nju.edu.cn}

\keywords{Compressible MHD equations, non-isentropic, zero magnetic diffusivity, incompressible limit}

\subjclass[2000]{76W05, 35B40}

\begin{abstract}
We study the incompressible limit of the compressible non-
\linebreak isentropic magnetohydrodynamic equations with zero
magnetic diffusivity and general initial data in the whole space
$\mathbb{R}^d$ ($d=2,3$). We first establish the existence of classic solutions on a time interval independent of
the Mach number. Then, by deriving uniform a priori estimates, we
obtain the convergence of the solution to that of the incompressible
magnetohydrodynamic equations as the Mach number tends to zero.

\end{abstract}

\maketitle

\section{Introduction}

This paper is concerned with the
incompressible limit to the  compressible non-isentropic magnetohydrodynamic (MHD)
equations with zero
magnetic diffusivity and general initial data in the
whole space $\mathbb{R}^d$ ($d=2,3$).

In the study of a highly conducting fluid, for example, the  magnetic fusion, it is
 rational to ignore the magnetic diffusion term in the  MHD equations
 since the  magnetic diffusion coefficient (resistivity coefficient)
  is inversely proportional to the electrical conductivity coefficient,
 see \cite{Fr}. In this situation, the system, describing the motion of the fluid in ${\mathbb R^d}$,
  can be described by the following compressible non-isentropic MHD equations with zero
magnetic diffusivity:
\begin{align}
&\partial_t\rho +\dv(\rho\u)=0, \label{naa} \\
&\partial_t(\rho\u)+\dv\left(\rho\u\otimes\u\right)+ {\nabla p}
  =(\nabla \times \H)\times \H+\dv\Psi, \label{nab} \\
&\partial_t\H-\nabla\times(\u\times\H)=0,\quad
\dv\H=0, \label{nac}\\
&\partial_t\E+\dv\left(\u(\E'+p)\right)
=\dv((\u\times\H)\times\H)+\dv(\u\Psi+\kappa\nabla\theta).
 \label{nad}
\end{align}
Here $\rho $ denotes the density, $\u\in \R^d$ the
velocity, $\H\in \R^d$ the magnetic field, and $\theta$ the
temperature, respectively; $\Psi$ is the viscous stress tensor given by
\begin{equation*}
\Psi=2\mu \mathbb{D}(\u)+\lambda\dv\u \;\mathbf{I}_d
\end{equation*}
with $\mathbb{D}(\u)=(\nabla\u+\nabla\u^\top)/2$, and
 $\mathbf{I}_d$ being the $d\times d$ identity matrix,
and $\nabla \u^\top$ the transpose of the matrix $\nabla \u$; $\E$
is the total energy given by $\E=\E'+|\H|^2/2$ and
$\E'=\rho\left(e+|\u|^2/2 \right)$ with $e$ being the internal
energy, $\rho|\u|^2/2$ the kinetic energy, and $|\H|^2/2$ the
magnetic energy. The viscosity coefficients $\lambda$ and $\mu$ of
the fluid satisfy $2\mu+d\lambda>0$ and $\mu>0$; $\kappa>0$ is the
heat conductivity. For simplicity, we assume that $\mu,\lambda$ and
$\kappa$ are constants. The equations of state $p=p(\rho,\theta)$
and $e=e(\rho,\theta)$ relate the pressure $p$ and the internal
energy $e$ to the density $\rho$ and the temperature $\theta$ of the
flow.

For the smooth solution to the system \eqref{naa}--\eqref{nad},
we can rewrite the total energy equation  \eqref{nad} in the form of
the internal energy. In fact, multiplying  \eqref{nab} by $\u$ and
\eqref{nac} by $\H$, and summing the resulting equations together, we obtain
\begin{align}\label{naaz}
\frac{d}{dt}\Big(\frac{1}{2}\rho|\u|^2
+\frac{1}{2}|\H|^2\Big)
&+\frac{1}{2}\dv\big(\rho|\u|^2\u\big)+\nabla p\cdot\u \nonumber\\
& =\dv\Psi\cdot
\u+(\nabla\times\H)\times\H\cdot\u
+\nabla\times(\u\times\H)\cdot\H.
\end{align}
Using  the identities
\begin{gather}
 \dv(\H\times(\nabla\times\H))  =|\nabla\times\H|^2-\nabla\times(\nabla\times\H)\cdot\H,\label{naeo}\\
\dv((\u\times\H)\times\H)
=(\nabla\times\H)\times\H\cdot\u+\nabla\times(\u\times\H)\cdot\H,
\label{nae}
\end{gather}
and subtracting \eqref{naaz} from \eqref{nad}, we thus obtain the
internal energy equation
\begin{equation}\label{nagg}
\partial_t (\rho e)+\dv(\rho\u e)+(\dv\u)p=\Psi:\nabla\u+\kappa \Delta \theta,
\end{equation}
where $\Psi:\nabla\u$ denotes the scalar product of two matrices:
\begin{equation*}
\Psi:\nabla\u=\sum^3_{i,j=1}\frac{\mu}{2}\left(\frac{\partial
u^i}{\partial x_j} +\frac{\partial u^j}{\partial
x_i}\right)^2+\lambda|\dv\u|^2=
2\mu|\mathbb{D}(\u)|^2+\lambda(\mbox{tr}\mathbb{D}(\u))^2.
\end{equation*}

Using the Gibbs relation
\begin{equation}\label{gibbs}
\theta \mathrm{d}S=\mathrm{d}e +
p\,\mathrm{d}\left(\frac{1}{\rho}\right),
\end{equation}
we can further replace the equation \eqref{nagg} by
\begin{equation}\label{naf}
\partial_t(\rho S)+\dv(\rho  S\u)=\Psi:\nabla\u+\kappa \Delta \theta,
\end{equation}
where $S$ denotes the entropy.

In the present paper, we assume that $\kappa=0$ in \eqref{naf}. Now,
as in \cite{MS01}, we reconsider the equations of state as functions
of $S$ and $p$, i.e., $\rho=R(S,p)$ and $\theta=\Theta(S,p)$ for
some positive smooth functions $R$ and $\Theta$ defined for all $S$
and $p>0$, and satisfying $\partial R/\partial p >0$. For instance,
we have $\rho=p^{1/\gamma}e^{-S/\gamma}$ for ideal fluids. Then, by
utilizing \eqref{naa} together with the constraint $\dv {\H}=0$, the
system \eqref{naa}, \eqref{nab}, \eqref{nad} and \eqref{naf} can be
rewritten as
\begin{align}
    & A(S,p)(\partial_t p+(\u\cdot \nabla) p)+\dv \u=0,\label{nag}\\
& R(S,p)(\partial_t \u+(\u\cdot \nabla) \u)+\nabla p = (\nabla \times \H)\times \H+\dv\Psi, \label{nah}\\
&  \partial_t {\H} -\cu(\u\times\H)=0, \quad \dv \H=0, \label{nai}\\
 & R(S,p)\Theta(S,p)(\partial_tS+(\u\cdot \nabla) S)=\Psi:\nabla\u,\label{naj}
\end{align}
where
\begin{align}\label{asp}
 A(S,p)=\frac{1}{R(S,p)}\frac{\partial R(S,p)}{\partial p}.
\end{align}

Considering the physical explanation of the incompressible limit, we
introduce the dimensionless parameter $\epsilon$, the Mach number,
and make the following changes of variables:
\begin{gather*}
    p (x, t)=p^\epsilon (x,\epsilon t), \quad S (x, t)=S^\epsilon (x,\epsilon t), \\
    {\u} (x,t)=\epsilon \u^\epsilon(x,\epsilon t), \;\;\;
   {\H} (x,t)=\epsilon \H^\epsilon(x,\epsilon t),
\end{gather*} and
\begin{gather*}
\mu=\epsilon\,\mu^\epsilon,\;\;\;\lambda=\epsilon\,\lambda^\epsilon.
\end{gather*}
As the analysis in \cite{MS01}, we use the
transformation $p^\epsilon (x, \epsilon t)=\underline{p} e^{\epsilon
q^\epsilon(x,\epsilon t)}$ for some positive constant $\underline{p}$.
Under these changes of variables, the system (\ref{nag})--(\ref{naj}) becomes
\begin{align}
    & a^\epsilon(S^\epsilon,\epsilon q^\epsilon)(\partial_t q^\epsilon+(\u^\epsilon\cdot \nabla) q^\epsilon)
    +\frac{1}{\epsilon}\dv \u^\epsilon=0,\label{nak}\\
& r^\epsilon(S^\epsilon,\epsilon q^\epsilon)(\partial_t
\u^\epsilon+(\u^\epsilon\cdot \nabla)
\u^\epsilon)+\frac{1}{\epsilon}\nabla q^\epsilon
 =  ( \cu{\H^\epsilon}) \times {\H^\epsilon}+\dv \Psi^\epsilon,  \label{nal}\\
&  \partial_t {\H}^\epsilon -\cu(\u^\epsilon\times\H^\epsilon)=0,
\quad \dv \H^\epsilon=0, \label{nam}\\
 &b^\epsilon(S^\epsilon,\epsilon q^\epsilon)(\partial_tS^\epsilon+(\u^\epsilon\cdot \nabla)S^\epsilon)
 =\epsilon^2\Psi^\epsilon
 :\nabla\u^\epsilon,\label{nan}
\end{align}
where we have used the abbreviations $\Psi^\epsilon=2\mu^\epsilon
\mathbb{D}(\u^\epsilon)+\lambda^\epsilon\dv\u^\epsilon
\;\mathbf{I}_d$ and
\begin{align}
 a^\epsilon(S^\epsilon,\epsilon
q^\epsilon)& :=  A(S^\epsilon, \underline{p}e^{\epsilon
q^\epsilon})\underline{p}e^{\epsilon q^\epsilon}
=\frac{\underline{p}e^{\epsilon
q^\epsilon}}{R(S^\epsilon,\underline{p}e^{\epsilon
 q^\epsilon})}\cdot
 \frac{\partial R(S^\epsilon,s)}{\partial s}\Big|_{s=\underline{p}e^{\epsilon q^\epsilon}},\label{nann}
\\
  r^\epsilon(S^\epsilon,\epsilon q^\epsilon)& :=  \frac{R(S^\epsilon,\underline{p}e^{\epsilon
 q^\epsilon})}{\underline{p}e^{\epsilon q^\epsilon}}, \quad
 b^\epsilon(S^\epsilon,\epsilon q^\epsilon)
 :=  R(S^\epsilon,\epsilon q^\epsilon)\Theta(S^\epsilon,\epsilon q^\epsilon).\label{nano}
 \end{align}

Formally, we obtain from \eqref{nak} and \eqref{nal} that $\nabla
q^\epsilon \rightarrow 0$ and $\dv \u^\epsilon=0$ as $\epsilon
\rightarrow 0$. Applying the operator \emph{curl} to \eqref{nal},
using the fact that $\cu \nabla =0$,  and letting
$\epsilon\rightarrow 0$ and $\mu^\epsilon\rightarrow \mu>0$, we
obtain
\begin{align*}
\cu\big( r(\bar{S},0)(\partial_t \v+\v\cdot \nabla \v)
 -(\cu\bar{\H}) \times \bar{\H} -\mu\Delta \v\big)=0,
\end{align*}
where we have assumed that
$(S^\epsilon,q^\epsilon,\u^\epsilon,\H^\epsilon)$ and
$r^\epsilon(S^\epsilon,\epsilon q^\epsilon)$ converge to
$(\bar{S},0,\v,\bar{\H})$ and $r(\bar S,0)$ in some sense,
respectively. Finally, applying the identity
\begin{align}\label{naff}
   \cu(  { \u}\times {\H})  =
   {\u} (\dv  {\H})  -   {\H}  (\dv {\u})
 + ( {\H}\cdot \nabla) {\u} - ( {\u}\cdot \nabla) {\H},
 \end{align}  we expect to get the following incompressible non-isentropic MHD
 equations
\begin{align}
&   r(\bar{S},0)(\partial_t \v+(\v\cdot \nabla) \v)
  -(\cu\bar{\H}) \times \bar{\H} +\nabla \pi =\mu\Delta \v, \label{nao} \\
&  \partial_t \bar{\H}  + ( {\v}  \cdot \nabla) \bar{\H}
   - ( \bar{\H} \cdot \nabla) {\v} =0, \label{nap} \\
 &\partial_t \bar{S} +(\v \cdot \nabla) \bar{S} =0, \label{naq}\\
& \dv\,\v=0,  \quad \dv \bar{\H} =0  \label{nar}
\end{align}
for some function $\pi$.

The aim of this paper is to establish the  above limit process
rigorously in the whole space $\mathbb{R}^d$.

Before stating our main results, we review the previous related
works. We begin with the results for the Euler and Navier-Stokes
equations. For well-prepared initial data, Schochet \cite{S86}
obtained the convergence of the compressible non-isentropic Euler
equations to the incompressible non-isentropic Euler equations in a
bounded domain for local smooth solutions. For general initial data,
M\'{e}tivier and Schochet \cite{MS01} proved rigorously the
incompressible limit   of the compressible non-isentropic Euler
equations  in the whole space $\R^d$. There are two key points in
the article \cite{MS01}. First, they obtained the uniform estimates in
Sobolev norms for the acoustic component of the solutions, which are
propagated by a wave equation with unknown variable coefficients.
Second, they proved that the local energy of the acoustic wave
decays to zero in the whole space case. This approach was extended
to the non-isentropic Euler equations in the exterior domain and the
full Navier-Stokes equations in the whole space by Alazard in
\cite{A05} and \cite{A06}, respectively, and to the dispersive
Navier-Stokes equations by Levermore, Sun and Trivisa \cite{LST}.
For the spatially periodic case, M\'{e}tivier and Schochet \cite{MS03} showed
the incompressible limit of the
 one-dimensional non-isentropic Euler equations with general data.
Compared to the non-isentropic case, the treatment of
the propagation of oscillations in the isentropic case is simpler
and there are many works on this topic. For example, see Ukai
\cite{U86}, Asano \cite{As87}, Desjardins and Grenier \cite{DG99} in
the whole space; Isozaki \cite{I87,I89} on the exterior domain;
Iguchi \cite{Ig97} on the half space; Schochet \cite{S94} and
Gallagher \cite{Ga01} in a periodic domain; and Lions and Masmoudi
\cite{LM98}, and Desjardins, et al. \cite{DGLM} in a bounded domain.
Recently, Jiang and Ou \cite{JO} investigated the incompressible
limit of the non-isentropic Navier-Stokes equations with zero heat
conductivity and well-prepared initial data in three-dimensional
bounded domains. The justification of the incompressible limit of the
non-isentropic Euler or Navier-Stokes equations with general initial
data in a bounded domain or a multi-dimensional periodic domain is
still open. The interested reader can refer to \cite{BDGL} for
formal computations on the case of viscous polytropic gases and
\cite{MS03,BDG} for some analysis on the non-isentropic Euler equations
 in a multi-dimensional periodic domain.
For more results on the incompressible limit of the Euler and Navier-Stokes equations,
please see the monograph \cite{FN} and the survey articles \cite{Da05,M07,S07}.

For the isentropic compressible MHD equations, the justification of
the low Mach limit was given in several aspects. In \cite{KM},
Klainerman and Majda first studied the incompressible limit of the
isentropic compressible ideal MHD equations in the spatially periodic case with
well-prepared initial data. Recently,
 the incompressible limit of  the isentropic viscous (including both viscosity
 and magnetic diffusivity) of compressible MHD equations with
 general data was studied in \cite{HW3,JJL1,JJL2}. In
\cite{HW3}, Hu and Wang obtained the convergence of weak solutions
of the compressible viscous MHD equations in bounded,
spatially periodic domains and the whole space, respectively. In \cite{JJL1},
the authors employed the modulated energy method to verify the
limit of weak solutions of the compressible MHD equations in the
torus to the strong solution of the incompressible viscous or
partial viscous MHD equations (the shear viscosity coefficient is
zero but the magnetic diffusion coefficient is a positive constant).
In \cite{JJL2}, the authors obtained the convergence of weak
solutions of the viscous compressible MHD equations to the strong
solution of the ideal incompressible MHD equations in the whole
space by using the dispersion property of the wave equation if both
the shear viscosity and the magnetic diffusion coefficients go to
zero.

For the full compressible MHD equations, the incompressible   limit
  in the framework of the so-called variational
solutions was studied in \cite{K10,KT,NRT}. Recently, the authors \cite{JJL3}
justified rigourously the low Mach number limit of classical solutions to the
  ideal or full compressible non-isentropic MHD equations with small entropy or
  temperature variations. When the heat conductivity and large temperature variations are present,
 the low Mach number limit for the full compressible non-isentropic MHD equations
 justified in~\cite{JJLX}. We emphasize here that the arguments in \cite{JJLX}
 are different from the present paper and
  depend essentially on positivity of fluid viscosity and magnetic diffusivity coefficients.

As aforementioned, in this paper we want to establish rigorously the
limit as $\epsilon\to 0$ to the system \eqref{nak}--\eqref{nan} for
$\mu_\epsilon\to\mu>0$.
 In this case, the magnetic equation is purely hyperbolic due to the lack of magnetic diffusivity. The first-order derivatives of $\H^\epsilon$ in the momentum equation and magnetic equation cannot be controlled.
 It is very hard to study the system  \eqref{nak}--\eqref{nan}. To the author's
knowledge, there is very few mathematical analysis  on the system  \eqref{nak}--\eqref{nan} with fixed or unfixed $\epsilon$,
even for the isentropic case. Our main idea is trying to make full use of the fluid viscosities to control the higher order derivatives of the magnetic field.

Now, we supplement the system \eqref{nak}--\eqref{nan} with initial conditions
\begin{align}
(S^\epsilon,q^\epsilon,\u^\epsilon,\H^\epsilon)|_{t=0}
=(S^\epsilon_0,q^\epsilon_0,\u^\epsilon_0,\H^\epsilon_0)
\label{nas}
\end{align}
and state the main results as follows.
\begin{thm}\label{mth}
 Let $s> d/2+2$ be an integer. Assume that $\mu^\epsilon \rightarrow \mu>0$ and
 $\lambda^\epsilon\rightarrow \lambda$ as $\epsilon\to 0$. Suppose that the initial data
 $(S^\epsilon_0,q^\epsilon_0,\u^\epsilon_0,\H^\epsilon_0)$
satisfy
\begin{align}\label{nat}
\|(S^\epsilon_0,q^\epsilon_0,\u^\epsilon_0,\H^\epsilon_0)\|_{H^s(\R^d)}\leq
M_0. \end{align} Then there exists a $T>0$ such that for any
$\epsilon\in (0,1]$, the Cauchy problem \eqref{nak}--\eqref{nan}, \eqref{nas}
has a unique solution $(S^\epsilon,q^\epsilon,\u^\epsilon,\H^\epsilon)\in
C^0([0,T],H^s(\R^d))$, and there exists a positive constant $N$,
depending only on $T$ and $M_0$, such that
\begin{align}
 \|\big(S^\epsilon,q^\epsilon,\u^\epsilon,\H^\epsilon\big)(t)\|_{H^s(\R^d)}\leq
 N, \quad \forall\, t\in [0,T].
\label{nav}
\end{align}
Furthermore, if there exist positive constants $\underline S$, $N_0$
and $\delta$ such that $S^\epsilon_0(x)$ satisfies
\begin{equation}\label{naw}
|S^\epsilon_0(x)-\underline{S}\,\, |\leq   {N}_0 |x|^{-1-\delta},
\quad |\nabla S^\epsilon_0(x)|\leq  N_0 |x|^{-2-\delta},
\end{equation}
then  the sequence of solutions
$(S^\epsilon,q^\epsilon,\u^\epsilon,\H^\epsilon)$ converges weakly in
  $L^\infty(0,T; H^s(\R^d))$ and strongly in
$L^2(0,T;H^{s'}_{\mathrm{loc}}(\R^d))$ for all $s'<s$ to a limit
$(\bar S,0,\v,\bar\H)$, where \linebreak  $(\bar S,\v,\bar{\H})$
is the unique solution in $C([0,T],H^s(\R^d))$ of \eqref{nao}--\eqref{nar} with
initial data $(\bar S,\v,\bar{\H})|_{t=0} =( S_0,\w_0,\H_0)$,
where $\w_0\in H^s(\R^d)$ is determined by
\begin{equation}\label{nax}
   \dv\,\w_0=0,  \,\; \cu(r(S_0,0)\w_0)=\cu(r(S_0,0)\v_0),
\;\, r(S_0,0):= \lim_{\epsilon \rightarrow
0}r^\epsilon(S^\epsilon_0,0).
\end{equation}
The function $\pi\in C([0,T]\times \R^d)$ satisfies $\nabla\pi\in
C([0,T],H^{s-1}(\R^d))$.
\end{thm}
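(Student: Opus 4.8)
The plan is to prove Theorem \ref{mth} in two stages: first the uniform-in-$\epsilon$ existence together with the bound \eqref{nav}, and then the passage to the limit $\epsilon\to0$. For the uniform existence I would first view \eqref{nak}--\eqref{nan} as a symmetric hyperbolic--parabolic system. The subsystem in $(q^\epsilon,\u^\epsilon)$ carries the singular terms $\epsilon^{-1}\dv\u^\epsilon$ and $\epsilon^{-1}\nabla q^\epsilon$, which together form a skew-symmetric first-order operator: multiplying \eqref{nak} by $q^\epsilon$ and \eqref{nal} by $\u^\epsilon$ and integrating, the two singular contributions combine into $\epsilon^{-1}\int\dv(q^\epsilon\u^\epsilon)=0$, so no negative power of $\epsilon$ survives in any symmetrized energy identity. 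For $|\alpha|\le s$ I would apply $\partial^\alpha$ to each equation and form the combined energy $E_s=\sum_{|\alpha|\le s}\int\big(a^\epsilon|\partial^\alpha q^\epsilon|^2+r^\epsilon|\partial^\alpha\u^\epsilon|^2+|\partial^\alpha\H^\epsilon|^2+|\partial^\alpha S^\epsilon|^2\big)$, using that $a^\epsilon,r^\epsilon,b^\epsilon$ are bounded above and below once $S^\epsilon$ and $\epsilon q^\epsilon$ stay in a fixed compact set. Standard Moser-type commutator estimates control every term in which $\partial^\alpha$ falls on a coefficient or on the lower-order factor of a product.

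The essential point is the top-order interaction between the Lorentz force $(\cu\H^\epsilon)\times\H^\epsilon$ in \eqref{nal} and the induction term $\cu(\u^\epsilon\times\H^\epsilon)$ in \eqref{nam}. After applying $\partial^\alpha$, each produces a contribution with $s+1$ derivatives, namely $(\cu\partial^\alpha\H^\epsilon)\times\H^\epsilon$ and $\cu(\partial^\alpha\u^\epsilon\times\H^\epsilon)$, that cannot be bounded by $E_s$ alone. The MHD structure saves this: pairing the first with $\partial^\alpha\u^\epsilon$ and the second with $\partial^\alpha\H^\epsilon$, a scalar-triple-product identity together with one integration by parts shows that the two highest-order contributions are exactly opposite and cancel. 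The remaining pieces carry at most $s$ derivatives on $\H^\epsilon$ and are cubic in the $H^s$ norms. Since the magnetic field has no diffusion, the only dissipation comes from $\dv\Psi^\epsilon$, which by $2\mu+d\lambda>0$ and $\mu>0$ yields a coercive term $c\|\nabla\partial^\alpha\u^\epsilon\|_{L^2}^2$; this parabolic gain in $\u^\epsilon$ is precisely what absorbs the commutators carrying one extra velocity derivative (such as $(\H^\epsilon\cdot\nabla)\partial^\alpha\u^\epsilon$ after rewriting \eqref{nam} via \eqref{naff}). The entropy equation \eqref{nan} contributes only the harmless $O(\epsilon^2)$ source. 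Altogether one closes a differential inequality $\frac{d}{dt}E_s\le C(E_s)$ with $C$ continuous, increasing and independent of $\epsilon$; a continuation argument then fixes $T>0$ and $N$ depending only on $M_0$, giving \eqref{nav}, while local existence for each fixed $\epsilon$ follows from the usual iteration scheme.

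For the limit, \eqref{nav} yields a subsequence converging weakly-$*$ in $L^\infty(0,T;H^s(\R^d))$ to some $(\bar S,\bar q,\v,\bar\H)$, and I would obtain strong local compactness separately for the ``incompressible'' and ``acoustic'' parts. Applying the Leray projection $\mathbf P$ to \eqref{nal} annihilates the singular term $\epsilon^{-1}\mathbf P\nabla q^\epsilon=0$, so $\partial_t\mathbf P(r^\epsilon\u^\epsilon)$ is bounded in $H^{s-2}$; likewise \eqref{nam} and \eqref{nan} bound $\partial_t\H^\epsilon$ and $\partial_t S^\epsilon$ in $H^{s-1}$. By the Aubin--Lions lemma, $\mathbf P\u^\epsilon$, $\H^\epsilon$ and $S^\epsilon$ are relatively compact in $C([0,T];H^{s'}_{\mathrm{loc}}(\R^d))$ for $s'<s$, which supplies the strong convergence needed to pass to the limit in the quadratic nonlinearities $\u^\epsilon\cdot\nabla\u^\epsilon$, $(\cu\H^\epsilon)\times\H^\epsilon$ and $\cu(\u^\epsilon\times\H^\epsilon)$. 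From \eqref{nak} and $\epsilon q^\epsilon\to0$ one reads off $\dv\v=0$, and passing to the limit in $\mathbf P$\eqref{nal}, in \eqref{nam} rewritten by \eqref{naff}, and in \eqref{nan} recovers \eqref{nao}--\eqref{nar}; the pressure $\pi$ is reconstructed from the elliptic problem obtained by taking $\dv$ of the limiting momentum balance, and the datum $\w_0$ is exactly the weighted divergence-free field matching the weighted vorticity of $\v_0$, as encoded in \eqref{nax}.

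The main obstacle is the acoustic component, which oscillates at frequency $\epsilon^{-1}$: the gradient part $\mathbf Q\u^\epsilon=\u^\epsilon-\mathbf P\u^\epsilon$ and $\nabla q^\epsilon$ have time derivatives of size $O(\epsilon^{-1})$, so Aubin--Lions fails for them. Following M\'etivier--Schochet \cite{MS01} I would eliminate the velocity from \eqref{nak}--\eqref{nal} to obtain, for the acoustic pair, a wave equation $\partial_t^2 q^\epsilon-\frac{1}{\epsilon^2 a^\epsilon}\,\dv\big((r^\epsilon)^{-1}\nabla q^\epsilon\big)=\ldots$ with variable, $\epsilon$-dependent coefficients, and then prove (i) uniform Sobolev bounds for this acoustic system and (ii) decay of its local energy to zero in $\R^d$. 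Step (ii) is exactly where the spatial-decay hypothesis \eqref{naw} on $S^\epsilon_0$ is indispensable: it guarantees that the wave coefficients tend to constants at infinity fast enough for the dispersion argument of \cite{MS01} to apply, so that $\nabla q^\epsilon\to0$ and $\mathbf Q\u^\epsilon\to0$ strongly in $L^2(0,T;L^2_{\mathrm{loc}})$ and the oscillations make no net contribution to the limiting nonlinearities. Combining this with the compactness of $\mathbf P\u^\epsilon$, $\H^\epsilon$ and $S^\epsilon$ upgrades the weak convergence to strong convergence in $L^2(0,T;H^{s'}_{\mathrm{loc}}(\R^d))$ and identifies the limit with the unique solution of \eqref{nao}--\eqref{nar}.
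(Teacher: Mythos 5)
Your second stage (weighted--vorticity/Leray compactness for the slow components plus the M\'etivier--Schochet local energy decay for the acoustic wave equation under the decay hypothesis \eqref{naw}) is essentially the paper's Section 4 and would be sound once the uniform bound \eqref{nav} is available. The genuine gap is in your first stage. For general (ill-prepared) data the time derivatives are of order $\epsilon^{-1}$: from \eqref{nak}, $\partial_t q^\epsilon=-\u^\epsilon\cdot\nabla q^\epsilon-(\epsilon a^\epsilon)^{-1}\dv\u^\epsilon$, and only $\|\epsilon\partial_t q^\epsilon\|_{s-1}$ is bounded uniformly. When you apply $\partial^\alpha$, $|\alpha|\le s$, and form the weighted energy $E_s$, skew-symmetry cancels only the top-order singular pairing $\epsilon^{-1}\int(\dv\partial^\alpha\u^\epsilon\,\partial^\alpha q^\epsilon+\nabla\partial^\alpha q^\epsilon\cdot\partial^\alpha\u^\epsilon)$; the coefficient commutators $[\partial^\alpha,a^\epsilon]\partial_t q^\epsilon$ and $[\partial^\alpha,r^\epsilon]\partial_t\u^\epsilon$ (equivalently $\epsilon^{-1}[\partial^\alpha,1/a^\epsilon]\dv\u^\epsilon$, etc., if you first divide by the coefficients) survive. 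These are precisely the terms you dismiss as ``standard Moser-type commutator estimates,'' but they are not harmless here: since $a^\epsilon=a^\epsilon(S^\epsilon,\epsilon q^\epsilon)$ and the entropy has order-one spatial variation, $\nabla a^\epsilon=O(1)$, so these commutators are of size $O(\epsilon^{-1})$ and your differential inequality degenerates to $\frac{d}{dt}E_s\le \epsilon^{-1}C(E_s)$, giving an existence time that shrinks to zero with $\epsilon$. (In the isentropic case $\nabla a^\epsilon=O(\epsilon)$, and for well-prepared data $\partial_t q^\epsilon=O(1)$, so your argument would close; that is exactly the dividing line, and it is why the non-isentropic problem with general data requires more than the Klainerman--Majda energy method.)

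This is why the paper never differentiates the system directly. Following \cite{MS01}, it applies powers of the conjugated operator $L_{E^\epsilon}(\partial_x)=\{E^\epsilon\}^{-1}L(\partial_x)$, which commutes \emph{exactly} with the singular term $\epsilon^{-1}\{E^\epsilon\}^{-1}L(\partial_x)$, while its commutator with $\partial_t+\u^\epsilon\cdot\nabla$ is $O(1)$ because only time derivatives of the coefficients enter, and those are uniformly bounded by \eqref{nbba}--\eqref{nbbb}. Since $(L_{E^\epsilon})^\sigma$ controls only the acoustic combinations of derivatives, this must be supplemented by separate estimates on $\cu(r_0\u^\epsilon)$ and $\cu\H^\epsilon$ (Lemma \ref{NLg}), the Helmholtz-type elliptic estimate of Lemma \ref{NLi}, and an induction on $\sigma$; and because there is no magnetic diffusion, the terms where $(L_{E^\epsilon})^\sigma$ or curl places $s+1$ derivatives on $\H^\epsilon$ are handled by integrating by parts once (moving the extra derivative onto the test function) and absorbing them into the viscous dissipation $\int_0^t\|\u^\epsilon\|_{s+1}^2\,d\tau$ -- which is the reason the quantity $\mathcal{M}_\epsilon(T)$ in Theorem \ref{prop} includes that integral. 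Your top-order MHD cancellation and your use of viscosity to absorb extra velocity derivatives are correct in spirit (they parallel Lemmas \ref{NLb}, \ref{NLee} and \ref{NLg}), but without an MS01-type device for the acoustic part, the uniform estimate \eqref{nav} -- the heart of the theorem -- is not established by your argument.
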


We briefly describe the strategy of the proof.
 The proof of Theorems \ref{mth} includes two main steps:
the uniform estimates of the solutions, and the convergence from the
original scaling equations to the limiting ones. Once we have
established the uniform estimates \eqref{nav} of the solutions in
Theorems \ref{mth}, the convergence of solutions is easily proved by
using the local energy decay theorem for fast waves in the whole
space, which is shown by M\'{e}tivier and Schochet in \cite{MS01}.
Thus, the main task in the present paper is to obtain the uniform
estimates \eqref{nav}. For this purpose, we shall modify the
approach developed in \cite{MS01}.
 In fact, due to  the strong coupling of hydrodynamic motion and
magnetic field, and the lack of magnetic diffusivity, new
difficulties arise in obtaining the uniform estimates for the solutions
to \eqref{nak}--\eqref{nan}, \eqref{nas}. First of all, when we perform the operator
$(\{E^\epsilon\}^{-1}L(\partial_x))^\sigma$ to the continuity and momentum
equations, or the operator $\emph{curl}$ to the momentum equations,
 one order more spatial derivatives arise for the magnetic field, and this prevents us
 from closing the energy estimates. Second, since the
coefficients of the acoustic wave equations depend on the entropy,
we could not get the estimates of $\| \u^\epsilon\|_{L^2(0,T;H^{s+1})}$
directly from the system. The ideas to overcome these difficulties here are
the following: We transfer one spatial derivative from the
magnetic field to the velocity with the help of the special coupled
way between magnetic field and fluid velocity. Then, to control
$\|\u^\epsilon\|_{L^2(0,T;H^{s+1})}$, we employ a kind of Helmholtz decomposition of the velocity. Third, we make full use the
special structure of the magnetic field equation and the estimates
on $\u$ to control $\| \H^\epsilon\|_{L^\infty(0,T;H^{s})}$.

We point out that our arguments in this paper can be modified
slightly to the case of the  the compressible non-isentropic MHD
equations with infinite Reynolds number. We shall give a brief
discussion in Section 5.

This paper is arranged as follows. In Section 2, we give notations, recall basic facts, and
 present commutators estimates. In Section 3 we establish the uniform boundeness of
   the solutions and prove the existence part of Theorem \ref{mth}.
In Section 4, we use the decay of the local energy to the acoustic wave equations
  to prove the convergent part of Theorem \ref{mth}.
   In the last section, we consider the incompressible limit to the compressible
   non-isentropic MHD equations with infinite Reynolds number.

\section{Preliminary}

We give notations and recall basic facts which will be used frequently in the proofs.

(1) We denote  $\langle \cdot,\cdot\rangle$ the standard inner
product in $L^2(\R^d)$ with $\langle f,f\rangle=\|f\|^2$ and
 $H^k$ the usual Sobolev space $W^{k,2}$ with norm $\|\cdot\|_{k}$, in particular,
$\|\cdot\|_0=\|\cdot\|$. The notation $\|(A_1,  \dots, A_k)\|$ means
the summation of $\|A_i\|$ ($i=1,\cdots,k$), and it also applies to other norms.
For a multi-index $\alpha = (\alpha_1,\dots,\alpha_d)$, we denote
$D^\alpha =\partial^{\alpha_1}_{x_1}\dots\partial^{\alpha_d}_{x_d}$ and
$|\alpha|=|\alpha_1|+\cdots+|\alpha_d|$. We also omit the spatial
domain $\R^d$ in integrals for convenience. We use the symbols $K$
or $C_0$ to denote the generic positive constants, and $C(\cdot)$
and $\tilde{C}(\cdot)$ to denote  the smooth functions, which may
vary from line to line.

(2) For a scalar function $f$ and vector functions
$\mathbf{a}$,  $\mathbf{b}$ and $\mathbf{c}$, we have the following basic vector identities:
\begin{align}
   \dv (\mathbf{a}\times \mathbf{b})& =\mathbf{b}\cdot \cu \mathbf{a} -\mathbf{a}\cdot \cu \mathbf{b},\label{va}\\
  \nabla(|\mathbf{a}|^2)& =2(\mathbf{a}\cdot \nabla )\mathbf{a}+2\mathbf{a}\times \cu \mathbf{a},\label{vbb}\\
\cu(f \mathbf{a}) & =f\cdot \cu \mathbf{a} -\nabla f\times   \mathbf{a},\label{vbc}\\
    \cu(\mathbf{a}\times \mathbf{b})& =(\mathbf{b}\cdot \nabla) \mathbf{a} - (\mathbf{a}\cdot \nabla) \mathbf{b}
  + \mathbf{a} (\dv \mathbf{b})  -   \mathbf{b} (\dv \mathbf{a}),\label{vb}\\
  \dv \big((\mathbf{a}\times \mathbf{b})\times \mathbf{c}\big)
 &=\mathbf{c}\cdot \cu (\mathbf{a}\times \mathbf{b}) -(\mathbf{a}\times \mathbf{b})\cdot \cu \mathbf{c}.\label{vc}
 \end{align}

(3) We have the following well-known nonlinear estimates \cite{Ho97}.

\hskip 4mm (i)\ \ Let  $\alpha=(\alpha_1,\alpha_2,\alpha_d)$ be a
multi-index such that $|\alpha|=k$. Then, for all
$\sigma\geq 0$, and $f,g\in H^{k+\sigma}(\R^d)$, there exists a
generic constant $C_0$ such that
\begin{align}\label{comc}
    \|[f,\partial^\alpha ]g\|_{H^{\sigma}}\leq & C_0(\|f\|_{W^{1,\infty}}\| g\|_{H^{\sigma+k-1}}
    +\| f\|_{H^{\sigma+k}}\|g\|_{L^{\infty}}).
\end{align}

\hskip 4mm (ii)\ \ For integers $ k\geq 0$, $l\geq 0$, $k+l\leq
\sigma$ and $\sigma > d/2$, the product maps continuously
$H^{\sigma -k}(\R^d)\times H^{\sigma-l}(\R^d)$  to
$H^{\sigma-k-l}(\R^d)$ and
\begin{align}\label{nay}
 \|uv\|_{\sigma-k-l}\leq K\|u\|_{\sigma-k}\|v\|_{\sigma-l}.
\end{align}

\hskip 4mm (iii)\ \  Let $\sigma>d/2$ be an integer.  Assume that
$F(u)$ is a smooth function such that $F(0)=0$ and $u\in
H^{\sigma}(\R^d)$, then $F(u)\in H^{\sigma}(\R^d)$ and its norm  is
bounded by
\begin{align}\label{naz}
\|F(u)\|_\sigma\leq C(\|u\|_\sigma)\|u\|_\sigma,
\end{align}
 where $C(\cdot)$ is independent of $u$ and maps $[0,\infty)$ into
 $[0,\infty)$.

\section{Uniform estimates}

In this section and the first part of the next section we assume
that $\mu^\epsilon \equiv \mu>0$ and $\lambda^\epsilon \equiv
\lambda$
 for simplicity of the presentation.
The general case  can be treated by a slight modification in the
arguments presented here.

In view of  \cite{MS01} and the classical local existence results obtained by
Vol'pert and Khudiaev \cite{VK} for hyperbolic-parabolic systems,
 the key point in the proof of the existence part of Theorem \ref{mth} is to
 establish  the uniform estimate \eqref{nav}, which can be
deduced from the following \emph{a priori} estimate.
\begin{thm}\label{prop}
For any $\epsilon>0$, let
$(S^\epsilon,q^\epsilon,\u^\epsilon,\H^\epsilon)\in
C([0,T],H^s(\R^d))$ be the solution to \eqref{nak}--\eqref{nan}.
Then there exists an increasing $C(\cdot)$ from $[0,\infty)$ to
$[0,\infty)$, such that
\begin{align}\label{nbc}
\mathcal{M}_\epsilon(T) \leq C_0 + (T +\epsilon )C(\mathcal{M}_\epsilon(T)),
 \end{align}
 where
 \begin{align}
\mathcal{M}_\epsilon(T) :=\,&  { \mathcal{N}_\epsilon(T)}^2
+\int_0^T\|\u^\epsilon\|_{s+1}^2d\tau,\label{nbb}
\end{align}with
\begin{align}
  \mathcal{N}_\epsilon(T) :=\,&  \sup_{ t\in  [0,T
]}\|(S^\epsilon,q^\epsilon,\u^\epsilon,\H^\epsilon)(t)\|_{s}.\label{nbbbb}
\end{align}
\end{thm}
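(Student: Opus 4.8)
The plan is to derive the a priori bound \eqref{nbc} via a hierarchy of energy estimates applied to the system \eqref{nak}--\eqref{nan}, following the structure of \cite{MS01} but adapted to handle the magnetic coupling without magnetic diffusivity. The central quantity is $\mathcal{M}_\epsilon(T)$, which controls both the $H^s$ norm of all unknowns (through $\mathcal N_\epsilon$) and, crucially, the extra parabolic gain $\int_0^T\|\u^\epsilon\|_{s+1}^2\,d\tau$ coming from the fluid viscosity. I would first rewrite \eqref{nak}--\eqref{nan} in the symmetric-hyperbolic-parabolic form $E^\epsilon(\partial_t U^\epsilon) + \tfrac1\epsilon L(\partial_x)U^\epsilon + \text{(lower order)} = \text{(viscous/forcing)}$, where $U^\epsilon=(q^\epsilon,\u^\epsilon)$, the singular part $L(\partial_x)$ is skew-symmetric, and $E^\epsilon=\mathrm{diag}(a^\epsilon, r^\epsilon\mathbf I_d)$ is a symmetrizer depending on $(S^\epsilon,\epsilon q^\epsilon)$. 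The skew-symmetry of $L$ is what makes the singular $1/\epsilon$ terms cancel in the $L^2$-type pairings, so that no negative power of $\epsilon$ survives on the right of \eqref{nbc}.

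The core of the argument is the $H^s$ estimate for $(q^\epsilon,\u^\epsilon)$. I would apply $D^\alpha$ with $|\alpha|\le s$ to \eqref{nak} and \eqref{nal}, pair against $E^\epsilon D^\alpha U^\epsilon$, and use the skew-symmetry of $L(\partial_x)$ together with the commutator estimate \eqref{comc} to absorb the principal terms. The viscous term $\dv\Psi^\epsilon=\mu\Delta\u^\epsilon+(\mu+\lambda)\nabla\dv\u^\epsilon$ produces, after integration by parts, the good dissipation $\mu\|\nabla D^\alpha\u^\epsilon\|^2$, which is exactly what builds the $\int_0^T\|\u^\epsilon\|_{s+1}^2$ term in $\mathcal M_\epsilon$. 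The genuinely new difficulty is the Lorentz force $(\cu\H^\epsilon)\times\H^\epsilon$ in \eqref{nal}: differentiating it $s$ times yields a term with $s+1$ derivatives on $\H^\epsilon$, namely $\sim (\cu D^\alpha\H^\epsilon)\times\H^\epsilon$ paired against $D^\alpha\u^\epsilon$, which cannot be controlled by $\mathcal N_\epsilon$ alone since we have no diffusive gain for $\H^\epsilon$. \textbf{This is the main obstacle.} My plan to resolve it is to transfer the offending derivative off the magnetic field: integrating by parts moves $\cu$ from $D^\alpha\H^\epsilon$ onto $D^\alpha\u^\epsilon$, converting the bad term into one of the form $\int D^\alpha\H^\epsilon\cdot(\text{quantity with } \nabla D^\alpha\u^\epsilon)$. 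The derivative on $\u^\epsilon$ is then absorbed into the viscous dissipation $\mu\|\nabla D^\alpha\u^\epsilon\|^2$ via Young's inequality, at the cost of a factor $C(\mathcal N_\epsilon)\|\H^\epsilon\|_s^2$, which is admissible in \eqref{nbc}.

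Next I would close the magnetic field estimate. Because \eqref{nam} is purely hyperbolic, I cannot gain regularity for $\H^\epsilon$; instead I apply $D^\alpha$ to \eqref{nam} rewritten via \eqref{vb} as $\partial_t\H^\epsilon + (\u^\epsilon\cdot\nabla)\H^\epsilon - (\H^\epsilon\cdot\nabla)\u^\epsilon + \H^\epsilon\dv\u^\epsilon = 0$, pair against $D^\alpha\H^\epsilon$, and use the commutator bound \eqref{comc}. The term $(\H^\epsilon\cdot\nabla)D^\alpha\u^\epsilon$ again carries $s+1$ derivatives on $\u^\epsilon$, but this is precisely what the dissipation term $\int_0^T\|\u^\epsilon\|_{s+1}^2$ in $\mathcal M_\epsilon$ is designed to control, so after Cauchy--Schwarz in time it contributes $C(\mathcal M_\epsilon)$. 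The entropy estimate from \eqref{nan} is the mildest: the right-hand side carries the factor $\epsilon^2$, so pairing $D^\alpha S^\epsilon$ gives a contribution bounded by $\epsilon\,C(\mathcal M_\epsilon)$, feeding the $\epsilon C(\mathcal M_\epsilon)$ term in \eqref{nbc}. To obtain the $\int_0^T\|\u^\epsilon\|_{s+1}^2$ bound independently I would use a Helmholtz-type decomposition $\u^\epsilon=\nabla\phi^\epsilon+\w^\epsilon$ with $\dv\w^\epsilon=0$: the divergence part $\dv\u^\epsilon=-\epsilon a^\epsilon(\partial_t q^\epsilon+\u^\epsilon\cdot\nabla q^\epsilon)$ is $O(\epsilon)$ in the relevant norm by \eqref{nak}, while the solenoidal part is controlled by its curl, which satisfies a good parabolic bound coming from $\cu$ applied to \eqref{nal}. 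Finally I would integrate all the differential inequalities in time, collect the three levels of estimates, and apply \eqref{naz} and \eqref{nay} to bound every coefficient $C(\cdot)$; summing gives $\mathcal M_\epsilon(T)\le C_0 + (T+\epsilon)C(\mathcal M_\epsilon(T))$ as claimed.
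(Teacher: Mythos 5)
Your overall architecture (symmetrization with $E^\epsilon$, viscous dissipation building $\int_0^T\|\u^\epsilon\|_{s+1}^2\,d\tau$, integration by parts to move one derivative off $\H^\epsilon$ onto $\u^\epsilon$ and absorb it into the dissipation, the magnetic and entropy estimates) matches the paper, but your central step fails. You propose to apply $D^\alpha$, $|\alpha|\le s$, to \eqref{nak}--\eqref{nal} and pair against $E^\epsilon D^\alpha\U^\epsilon$. The singular term is harmless only after the commutator $\frac{1}{\epsilon}[D^\alpha,\{E^\epsilon\}^{-1}L(\partial_x)]\U^\epsilon$ (equivalently $[D^\alpha,E^\epsilon]\,\partial_t\U^\epsilon$ in the symmetrized form) has been disposed of, and this commutator is of size $\frac{1}{\epsilon}\,\|\nabla E^\epsilon\|_{s-1}\,\|L(\partial_x)\U^\epsilon\|_{s-1}$. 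In the isentropic (Klainerman--Majda) setting $E^\epsilon$ depends only on $\epsilon q^\epsilon$, so $\nabla E^\epsilon=O(\epsilon)$ and the commutator is $O(1)$; here $E^\epsilon$ depends on $S^\epsilon$, whose gradient is $O(1)$ (large entropy variations), while $L(\partial_x)\U^\epsilon=(\dv\u^\epsilon,\nabla q^\epsilon)$ is $O(1)$, not $O(\epsilon)$, because the data are ill-prepared and acoustic oscillations are present. Your energy inequality would therefore read $\mathcal{M}_\epsilon(T)\le C_0+\frac{T}{\epsilon}C(\mathcal{M}_\epsilon(T))$, which is useless for a uniform bound. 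The same defect appears twice more in your plan: your claim that $\dv\u^\epsilon=-\epsilon a^\epsilon(\partial_t q^\epsilon+\u^\epsilon\cdot\nabla q^\epsilon)$ is ``$O(\epsilon)$'' is false since $\partial_t q^\epsilon=O(1/\epsilon)$ for general data; and applying $\cu$ directly to \eqref{nal} divided by $r^\epsilon$ does not annihilate the singular term, because $\cu\big(\frac{1}{\epsilon r^\epsilon}\nabla q^\epsilon\big)=\frac{1}{\epsilon}\nabla\{r^\epsilon\}^{-1}\times\nabla q^\epsilon$ is again $O(1/\epsilon)$.

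The paper's proof is organized precisely to avoid these singular commutators. Instead of $D^\alpha$ it uses the M\'etivier--Schochet operator $\{L_{E^\epsilon}(\partial_x)\}^\sigma=(\{E^\epsilon\}^{-1}L(\partial_x))^\sigma$, which commutes exactly with the singular term of \eqref{nbeb}; the price is that this operator is not elliptic, so $\|\U^\epsilon\|_\sigma$ is recovered from $\|\{L_{E^\epsilon}(\partial_x)\}^\sigma\U^\epsilon\|_0$ together with $\|\cu(r_0\u^\epsilon)\|_{\sigma-1}$ through the Helmholtz-type estimate of Lemma \ref{NLi}, and the bound is closed by induction on $\sigma$ in Lemma \ref{NLk}. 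For the vorticity, the singular term is removed not by a direct curl but by first passing to $r_0\u^\epsilon$ with $r_0=r^\epsilon(S^\epsilon,0)$, using the factorization $1-r_0/r^\epsilon=\epsilon g^\epsilon$ of \eqref{nbfe} so that equation \eqref{nbff} carries exactly $\frac{1}{\epsilon}\nabla q^\epsilon$ (killed by $\cu$) plus the regular remainder $g^\epsilon\nabla q^\epsilon$. Your treatment of the Lorentz force, the magnetic equation and the entropy is consistent with Lemmas \ref{NLee}, \ref{NLg} and \ref{NLa}, but without the iterated operator, the weighted curl, and the induction, the central $H^s$ estimate cannot be closed uniformly in $\epsilon$.
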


The remainder of this section is devoted to establishing \eqref{nbc}. In the calculations
that follow, we always suppose that the assumptions in Theorem
\ref{mth} hold. We consider a solution
$(S^\epsilon,q^\epsilon,\u^\epsilon,\H^\epsilon)$ to the problem
\eqref{nak}--\eqref{nan}, \eqref{nas} on $C([0,T],H^s(\R^d))$ with
initial data satisfying \eqref{nat}.

The main idea for proving the uniform estimate \eqref{nbc} is
motivated by the work \cite{MS01} where the operator
$(\{E^\epsilon\}^{-1}L(\partial_x))^m$ is introduced to control the acoustic
components of velocity for the Euler equations. When the strong coupling of the
fluid and magnetic filed is present, however, the arguments in \cite{MS01} cannot be
directly applied to get a uniform estimate of the acoustic parts due to lack of magnetic
diffusion in the magnetic equation. Instead, here we transfer one
order spatial derivative from $\H^\epsilon$ to $\u^\epsilon$, and then employ the fluid viscosity
to control higher derivatives. We remark that the reason that
these techniques work is due to the special structure of coupling between the fluid
and magnetic fields.

We begin with the estimate on the entropy $S^\epsilon$.

\begin{lem}  \label{NLa}
There exist a constant $C_0>0$   and a function $C(\cdot)$,
independent of $\epsilon$,  such that for all $ t\in (0,T]$,
\begin{align}\label{nbe}
\|S^\epsilon(t)\|^2_{s}\leq  C_0 + tC(\mathcal{M}_\epsilon(T))+\epsilon^2
C(\mathcal{M}_\epsilon(T)).
\end{align}
\end{lem}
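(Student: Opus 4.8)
The plan is to obtain the energy estimate for $S^\epsilon$ by applying the spatial derivatives $D^\alpha$ with $|\alpha|\le s$ to the entropy equation \eqref{nan} and testing against $D^\alpha S^\epsilon$ in $L^2$. First I would rewrite \eqref{nan} in the transport form
\begin{equation*}
\partial_t S^\epsilon + (\u^\epsilon\cdot\nabla)S^\epsilon = \frac{\epsilon^2}{b^\epsilon(S^\epsilon,\epsilon q^\epsilon)}\,\Psi^\epsilon:\nabla\u^\epsilon,
\end{equation*}
noting that $b^\epsilon$ is bounded below by a positive constant as long as $S^\epsilon$ and $\epsilon q^\epsilon$ stay in a fixed compact set, which is guaranteed by the a priori bound $\mathcal M_\epsilon(T)$ together with the Sobolev embedding $H^s\hookrightarrow L^\infty$ (valid since $s>d/2$). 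The left-hand side is a pure transport operator with velocity $\u^\epsilon$, while the right-hand side carries the crucial factor $\epsilon^2$, which is what produces the harmless $\epsilon^2 C(\mathcal M_\epsilon(T))$ term in \eqref{nbe}.

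Next I would commute $D^\alpha$ through the convection term. Writing $\langle D^\alpha S^\epsilon, \partial_t D^\alpha S^\epsilon\rangle = \tfrac12\frac{d}{dt}\|D^\alpha S^\epsilon\|^2$ and integrating by parts in the term $\langle D^\alpha S^\epsilon, (\u^\epsilon\cdot\nabla)D^\alpha S^\epsilon\rangle$, the top-order convection contribution reduces to $-\tfrac12\langle (\dv\u^\epsilon) D^\alpha S^\epsilon, D^\alpha S^\epsilon\rangle$, which is bounded by $\|\dv\u^\epsilon\|_{L^\infty}\|S^\epsilon\|_s^2 \le C(\mathcal M_\epsilon(T))$. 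The commutator $[D^\alpha,\u^\epsilon\cdot\nabla]S^\epsilon$ is estimated by the Moser-type inequality \eqref{comc}, giving a bound of the form $C_0\|\u^\epsilon\|_s\|S^\epsilon\|_s$, again absorbed into $C(\mathcal M_\epsilon(T))$. For the forcing term I would use the composition estimate \eqref{naz} to control $1/b^\epsilon$ in $H^s$ and the product estimate \eqref{nay} to bound $\Psi^\epsilon:\nabla\u^\epsilon$; since $\Psi^\epsilon$ is first order in $\u^\epsilon$, this involves $\|\u^\epsilon\|_{s+1}^2$, which is precisely the quantity absorbed into the integral part of $\mathcal M_\epsilon(T)$ after integrating in time, with the $\epsilon^2$ prefactor preserved.

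Summing over $|\alpha|\le s$ and integrating the resulting differential inequality in time from $0$ to $t$ then yields
\begin{equation*}
\|S^\epsilon(t)\|_s^2 \le \|S^\epsilon_0\|_s^2 + \int_0^t C(\mathcal M_\epsilon(T))\,d\tau + \epsilon^2\int_0^t C(\mathcal M_\epsilon(T))\,d\tau,
\end{equation*}
and using \eqref{nat} to bound $\|S^\epsilon_0\|_s^2\le C_0$ and the fact that the integrands are controlled by $C(\mathcal M_\epsilon(T))$ gives \eqref{nbe}, where the time integral of the forcing contributes the $tC(\mathcal M_\epsilon(T))$ term and the $\epsilon^2$-weighted piece contributes $\epsilon^2 C(\mathcal M_\epsilon(T))$ (the $\int_0^T\|\u^\epsilon\|_{s+1}^2$ absorbed there being part of $\mathcal M_\epsilon(T)$).

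The main subtlety, rather than a genuine obstacle, is the careful bookkeeping of the $\epsilon^2$ factor together with the top-order velocity derivative in the viscous forcing: one must route $\|\u^\epsilon\|_{s+1}^2$ into the time integral so that it is swallowed by $\mathcal M_\epsilon(T)$, and keep the $\epsilon^2$ in front so that this contribution lands in the last term of \eqref{nbe} rather than spoiling the $tC(\mathcal M_\epsilon(T))$ term. The positive lower bound on $b^\epsilon$ must also be justified uniformly in $\epsilon$, which is where the uniform $L^\infty$ control coming from the a priori assumption on $\mathcal M_\epsilon(T)$ is essential.
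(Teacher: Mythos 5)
Your proposal is correct and takes essentially the same route as the paper's proof: rewrite \eqref{nan} as a transport equation by dividing by the positive coefficient $b^\epsilon$, apply $D^\alpha$ and perform an $L^2$ energy estimate, handle the convection term by integration by parts and the commutator via \eqref{comc}, and absorb the $\epsilon^2$-weighted viscous forcing, which carries the top-order factor $\|\u^\epsilon\|_{s+1}$, into the time-integral component of $\mathcal{M}_\epsilon(T)$ so that it produces the $\epsilon^2 C(\mathcal{M}_\epsilon(T))$ term. The only cosmetic differences are that you bound the forcing quadratically in $\|\u^\epsilon\|_{s+1}$ via the product estimate \eqref{nay} where the paper uses a Moser-type bound linear in $\|\u^\epsilon\|_{s+1}$ followed by Young's inequality, and that you correctly take $|\alpha|\le s$ (the paper writes $|\alpha|\le s-1$, though its estimate of $h_\alpha$ in terms of $D^s$ shows the same count); both variants are absorbed identically.
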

\begin{proof}
For the multi-index $\alpha$ satisfying $|\alpha| \leq s-1$, denote
$f_\alpha=\partial_x^\alpha S^\epsilon$. In view of the positivity
of $b^\epsilon(S^\epsilon,\epsilon q^\epsilon)$, we deduce from
\eqref{nan} that,
\begin{align}\label{nbaa}
\partial_t f_\alpha +(\u^\epsilon\cdot \nabla)f_\alpha =g_\alpha+\epsilon^2
h_\alpha,
\end{align}
where
\begin{align*}
g_\alpha=-[\partial_x^\alpha, \u^\epsilon]\cdot \nabla
S^\epsilon,\;\;h_\alpha=\epsilon^2
\partial^\alpha_x\left(\frac{\Psi^\epsilon
 :\nabla\u^\epsilon}{b^\epsilon(S^\epsilon,\epsilon
  q^\epsilon)}\right).
\end{align*}

The commutator inequality \eqref{comc} and Sobolev embedding theorem  imply that
   $\|g_\alpha\|\leq C(M_\epsilon(T))$. On the other hand, from the Sobolev embedding theorem
   and the Moser-type inequality \cite{KM} we get
\begin{align*}
   \|h_\alpha\|& \leq K\Big( \|(\Psi^\epsilon:\nabla\u^\epsilon)\|_{L^\infty}
   \Big\|D^{s}\Big(\frac{1}{b^\epsilon}\Big)\Big\|
   +\|D^{s}(\Psi^\epsilon
 :\nabla\u^\epsilon)\|\Big\|\frac{1}{b^\epsilon}\Big\|_{L^\infty}\Big)\\
   &\leq C(\mathcal{N}_\epsilon(T))+C(\mathcal{N}_\epsilon(T))\|\u^\epsilon\|_{s+1}.
\end{align*}
Multiplying \eqref{nbaa} by $f_\alpha$ and integrating over
$[0,t]\times \R^d$ with $t\leq T$, we obtain
\begin{align*}
 \|f_\alpha(t)\|^2 \leq & \|f_\alpha(0)\|^2 +\|\partial_x\u^\epsilon\|_{L^\infty((0,t)\times\R^d)}
  \int^t_0 \|f_\alpha(\tau)\|^2 d\tau\\
                       & + 2\int^t_0 \|g_\alpha(\tau)\|\,\|f_\alpha(\tau)\| d\tau +
                       2 \epsilon^2  \int^t_0 \|h_\alpha(\tau)\|\, \|f_\alpha(\tau)\| d\tau \\
             \leq & C_0+tC(\mathcal{M}_\epsilon(T))+\epsilon^2 C(\mathcal{M}_\epsilon(T)),
\end{align*}
 where we have used Young's inequality  and the embedding
$H^\sigma\hookrightarrow L^\infty$ for $\sigma> d/2$. The conclusion
then follows by
  adding up these estimates for all $|\alpha|\leq s-1$.
\end{proof}

The following $L^2$-bound of
$(q^\epsilon,\u^\epsilon,\H^\epsilon)$ can be obtained directly using the energy method due
to the skew-symmetry of the singular term in the system and the special structure of
coupling between the magnetic field and fluid velocity. This $L^2$-bound
is very important in our arguments, since the induction
analysis will be used to get the desired Sobolev estimates.

\begin{lem} \label{NLb}
  There exist   constants $C_0>0$ and $0<\xi_1<\mu$,  and a function $C(\cdot)$ independent of
  $\epsilon$, such that for all $ t\in [0,T]$,
\begin{align}\label{nbf}
\|(q^\epsilon,\u^\epsilon,\H^\epsilon)(t)\|^2 +\xi_1 \int^t_0
\|\nabla \u^\epsilon(\tau)\|^2d\tau \leq  C_0 + tC(\mathcal{M}_\epsilon(T)).
\end{align}
\end{lem}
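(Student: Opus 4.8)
The plan is to derive a single combined $L^2$ energy identity in which the two dangerous structures — the singular $O(1/\epsilon)$ terms and the Lorentz coupling — cancel, leaving only the viscous dissipation and harmless lower–order terms. Concretely, I would multiply the continuity–type equation \eqref{nak} by $q^\epsilon$, the momentum equation \eqref{nal} by $\u^\epsilon$, and the magnetic equation \eqref{nam} by $\H^\epsilon$, integrate each over $\R^d$, and add. Using integration by parts, the time–derivative–plus–transport contributions produce $\frac12\frac{d}{dt}\int a^\epsilon|q^\epsilon|^2$ and $\frac12\frac{d}{dt}\int r^\epsilon|\u^\epsilon|^2$ up to the commutator integrals $\frac12\int(\partial_t a^\epsilon+\dv(a^\epsilon\u^\epsilon))|q^\epsilon|^2$ and $\frac12\int(\partial_t r^\epsilon+\dv(r^\epsilon\u^\epsilon))|\u^\epsilon|^2$, while the magnetic equation contributes $\frac12\frac{d}{dt}\|\H^\epsilon\|^2$.

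The key cancellations are then: (i) the singular terms $\frac1\epsilon\int(\dv\u^\epsilon)q^\epsilon$ and $\frac1\epsilon\int\nabla q^\epsilon\cdot\u^\epsilon$ sum to zero after integration by parts, which is the skew–symmetry of the acoustic part; and (ii) the Lorentz term $\int(\cu\H^\epsilon\times\H^\epsilon)\cdot\u^\epsilon$ arising in the momentum estimate is cancelled by the term produced in the magnetic estimate. For (ii) I would use identity \eqref{nae}: integrating $\dv((\u^\epsilon\times\H^\epsilon)\times\H^\epsilon)$ over $\R^d$ gives zero, whence $\int(\cu\H^\epsilon\times\H^\epsilon)\cdot\u^\epsilon=-\int\cu(\u^\epsilon\times\H^\epsilon)\cdot\H^\epsilon$, and by \eqref{nam} the right side equals $-\frac12\frac{d}{dt}\|\H^\epsilon\|^2$. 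This is precisely the special coupling structure the introduction highlights, and it is what lets the Lorentz force be absorbed without leaving an uncontrolled derivative of $\H^\epsilon$. The viscous term gives $\int\dv\Psi^\epsilon\cdot\u^\epsilon=-\int\Psi^\epsilon:\nabla\u^\epsilon=-2\mu\|\mathbb{D}(\u^\epsilon)\|^2-\lambda\|\dv\u^\epsilon\|^2$, which, using the whole–space identity $\|\mathbb{D}(\u^\epsilon)\|^2=\frac12\|\nabla\u^\epsilon\|^2+\frac12\|\dv\u^\epsilon\|^2$ together with $\mu>0$ and $2\mu+d\lambda>0$, is bounded below by $\xi_1\|\nabla\u^\epsilon\|^2$ for some $0<\xi_1<\mu$.

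Collecting everything yields $\frac12\frac{d}{dt}(\int a^\epsilon|q^\epsilon|^2+\int r^\epsilon|\u^\epsilon|^2+\|\H^\epsilon\|^2)+\xi_1\|\nabla\u^\epsilon\|^2$ bounded by the two commutator integrals. These I would control in an $L^\infty$–times–$L^2$ form: $\partial_t a^\epsilon$ and $\partial_t r^\epsilon$ are expanded by the chain rule into $\partial_t S^\epsilon$ and $\epsilon\partial_t q^\epsilon$, which are replaced using \eqref{nan} and \eqref{nak}, so that they, along with $\dv(a^\epsilon\u^\epsilon)$ and $\dv(r^\epsilon\u^\epsilon)$, are bounded in $L^\infty$ by $C(\mathcal{N}_\epsilon(T))$ (here $s>d/2+2$ guarantees $\nabla\u^\epsilon,\dv\u^\epsilon\in L^\infty$ and the $\epsilon^2\Psi^\epsilon:\nabla\u^\epsilon$ source is harmless). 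Since $a^\epsilon,r^\epsilon$ are bounded above and below by positive constants depending on $\mathcal{N}_\epsilon(T)$, the energy is equivalent to $\|(q^\epsilon,\u^\epsilon,\H^\epsilon)\|^2$; integrating in $t$ and absorbing $\int_0^t C(\mathcal{N}_\epsilon(T))\|(q^\epsilon,\u^\epsilon,\H^\epsilon)\|^2\,d\tau\le tC(\mathcal{M}_\epsilon(T))$ gives \eqref{nbf}. The main obstacle is organizing the two cancellations simultaneously — especially verifying (ii) rigorously so that the coupling truly closes — and confirming that the dissipation is coercive enough to deliver a strictly positive $\xi_1<\mu$ under the stated constraints on $\mu$ and $\lambda$.
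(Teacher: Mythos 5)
Your proposal is correct and follows essentially the same route as the paper: multiply \eqref{nak}, \eqref{nal}, \eqref{nam} by $q^\epsilon$, $\u^\epsilon$, $\H^\epsilon$, exploit the skew-symmetry of the singular terms and the cancellation of the Lorentz/induction coupling via \eqref{nae}, use coercivity of the viscous dissipation under $\mu>0$, $2\mu+d\lambda>0$, and bound $\partial_t a^\epsilon$, $\partial_t r^\epsilon$ through the equations. The only cosmetic difference is that you absorb the lower-order integral directly into $tC(\mathcal{M}_\epsilon(T))$, whereas the paper runs Gronwall's lemma and then uses $e^{Ct}\le 1+\tilde Ct$; both yield \eqref{nbf}.
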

\begin{proof} Multiplying \eqref{nak} by $q^\epsilon$, \eqref{nal} by
$\u^\epsilon$, and \eqref{nam} by $\H^\epsilon$, respectively,
integrating over $\R^d$, and adding the resulting equations together, we obtain
\begin{align}
    &\langle a^\epsilon \partial_t q^\epsilon, q^\epsilon\rangle
+  \langle r^\epsilon \partial_t \u^\epsilon, \u^\epsilon\rangle
   +\langle  \partial_t \H^\epsilon, \H^\epsilon\rangle
   +\mu \|\nabla \u^\epsilon\|^2+
   (\mu+\lambda)\|\dv \u^\epsilon\|^2\nonumber\\
      &\qquad +\langle a^\epsilon (\u^\epsilon\cdot \nabla) q^\epsilon, q^\epsilon\rangle
   + \langle r^\epsilon (\u^\epsilon\cdot \nabla) \u^\epsilon,
   \u^\epsilon\rangle\nonumber\\
   & =\int[( \cu{\H^\epsilon}) \times
 {\H^\epsilon}]\cdot\u^\epsilon\;dx+\int\cu(\u^\epsilon\times\H^\epsilon)\cdot
 \H^\epsilon\;dx.\label{first1}
  \end{align}
Here the singular terms involving $1/\epsilon$  are canceled. Using
the identity \eqref{nae} and integrating by parts, we immediately
obtain that
\begin{align*}
\int[( \cu{\H^\epsilon}) \times
 {\H^\epsilon}]\cdot\u^\epsilon\;dx+\int\cu(\u^\epsilon\times\H^\epsilon)\cdot
 \H^\epsilon\;dx=0.
\end{align*}

In view of the positivity and smoothness of $a^\epsilon(S^\epsilon,
\epsilon q^\epsilon)$ and $r^\epsilon(S^\epsilon, \epsilon q^\epsilon)$,
we get directly from \eqref{nak}, \eqref{nan} and \eqref{naz} that
\begin{align}\label{nbba}
  \| \partial_t S^\epsilon\|_{s-1}\leq C(\mathcal{N}_\epsilon(T)), \quad
  \| \epsilon\partial_t q^\epsilon\|_{s-1}\leq C(\mathcal{N}_\epsilon(T)),
\end{align}
while by the Sobolev embedding theorem, we find that
\begin{align}\label{nbbb}
\|(\partial_{t}a^\epsilon, \partial_{t}r^\epsilon)\|_{L^\infty}\leq
 \|(\partial_{t}a^\epsilon,\partial_{t}r^\epsilon)\|_{s-2}\leq
 C(\mathcal{N}_\epsilon(T)).
\end{align}
By the definition of $\mathcal{N}_\epsilon(T)$ and the Sobolev embedding theorem,
it is easy to see that
\begin{align*}
\|(\nabla a^\epsilon, \nabla r^\epsilon)\|_{L^\infty}\leq C(\mathcal{N}_\epsilon(T)).
\end{align*}

Since $\mu>0, 2\mu+d\lambda>0$, there exists a positive constant
$\kappa_1$ such that
\begin{align*}
\mu \|\nabla \u^\epsilon\|^2+
   (\mu+\lambda)\|\dv \u^\epsilon\|^2\geq \kappa_1 \|\nabla\u^\epsilon\|^2.
\end{align*}
Thus, from \eqref{first1} we get that
\begin{align}\label{nbg}
 \langle a^\epsilon   q^\epsilon, q^\epsilon\rangle
   +&  \langle r^\epsilon  \u^\epsilon, \u^\epsilon\rangle
    +\langle   \H^\epsilon, \H^\epsilon\rangle+
    \kappa_1 \int^t_0 \|\nabla \u^\epsilon(\tau)\|^2d\tau\nonumber\\
  \leq & \big\{ \langle a^\epsilon   q^\epsilon, q^\epsilon\rangle
   +  \langle r^\epsilon  \u^\epsilon, \u^\epsilon\rangle+\langle
   \H^\epsilon, \H^\epsilon\rangle\big\}\big|_{t=0}      \nonumber\\
  &  + C(\mathcal{M}_\epsilon(T))\int^t_0  \big\{|q^\epsilon(\tau)|^2   +|\u^\epsilon(\tau)|^2
 +|\H^\epsilon(\tau)|^2\big\}d\tau.
   \end{align}
Moreover, we have
\begin{align}\label{aaaaa}
\|q^\epsilon\|^2+\|\u^\epsilon\|^2&\leq
\|(a^\epsilon)^{-1}\|_{L^\infty}\langle a^\epsilon q^\epsilon,
q^\epsilon\rangle
   + \|(r^\epsilon)^{-1}\|_{L^\infty}\langle r^\epsilon  \u^\epsilon,
   \u^\epsilon\rangle\nonumber\\
&\leq C_0(\langle a^\epsilon q^\epsilon,
q^\epsilon\rangle+\langle r^\epsilon\u^\epsilon, \u^\epsilon\rangle),
\end{align}
since $a^\epsilon$ and $r^\epsilon$ are uniformly bounded away from
zero. Applying Gronwall's Lemma to \eqref{nbg}, we conclude
\begin{align*}
   \|(q^\epsilon, \u^\epsilon,  \H^\epsilon)(t)\|^2\le C_0 \|(q^\epsilon_0, \u^\epsilon_0,
   \H^\epsilon_0)\|^2\exp\{tC(\mathcal{M}_\epsilon(T))\}.
\end{align*}
Therefore, the estimate \eqref{nbf} follows from an elementary
inequality
\begin{align}\label{ele}
e^{Ct}\leq 1+\tilde{C}t, \quad 0 \leq t \leq T_0,
\end{align}
where $T_0$ is some fixed constant.
\end{proof}

Concerning the desired higher order estimates, we cannot directly get them by differentiating
the system as done in \cite{MS01}, since the coefficients $a^\epsilon(S^\epsilon, \epsilon
q^\epsilon), r^\epsilon(S^\epsilon, \epsilon q^\epsilon)$ and
$b^\epsilon(S^\epsilon, \epsilon q^\epsilon)$ contain two scales
$S^\epsilon$ and $\epsilon q^\epsilon$. We shall adapt and modify the
techniques developed in \cite{MS01} to derive the higher order estimates. Set
\begin{gather*}
  E^\epsilon(S^\epsilon,\epsilon q^\epsilon)=\left(\begin{array}{cc}
                   a^\epsilon(S^\epsilon,\epsilon q^\epsilon) & 0 \\
                    0 & r^\epsilon(S^\epsilon,\epsilon q^\epsilon)\mathbf{I}_{d}\\
                   \end{array}\right),\quad
    \U^\epsilon=\left(\begin{array}{c}
                   q^\epsilon \\
                   \u^\epsilon
                    \end{array}
                    \right),
                  \\[1mm]
      L(\partial_x)=\left(\begin{array}{cc}
                    0 & \dv  \\
                    \nabla & 0\\
                    \end{array}\right),
     \end{gather*}
where $\mathbf{I}_{d}$ denotes the  $d\times d$ unit matrix.

 Let $L_{E^\epsilon}(\partial_x )= \{E^\epsilon\}^{-1}L(\partial_x)$ and
$r_0(S^\epsilon) = r^\epsilon(S^\epsilon,0)$. Note that
$r_0(S^\epsilon)$ is smooth, positive, and bounded away from zero
with respect to each $\epsilon$. First, using Lemma \ref{NLa} and employing the same
analysis as in \cite{MS01}, we have

\begin{lem}\label{NLi}
  There exist constants $C_1>0$, $K>0$, and a function $C(\cdot)$, depending
only on $M_0$, such that for all $\sigma \in [1,\dots, s]$ and  $t\in [0,T]$,
\begin{align}\label{nbh}
  \|\U^\epsilon\|_\sigma \leq K\|L(\partial_x) {\U}^\epsilon\|_{\sigma -1}
+\tilde{ C}\big(\|\cu
(r_0\u^\epsilon)\|_{\sigma-1}+\|\U^\epsilon\|_{\sigma-1}\big)
\end{align}
and
\begin{align}\label{nbha}
\|\U^\epsilon\|_\sigma \leq \tilde{
C}\big\{\|\{L_{E^\epsilon}(\partial_x)\}^\sigma  {\U}^\epsilon\|_{0}
+\|\cu(r_0\u^\epsilon)\|_{\sigma-1}+\|\U^\epsilon\|_{\sigma-1}\big\},
\end{align}
where $\tilde{C}:=C_1+tC(\mathcal{M}_\epsilon(T))+\epsilon C(\mathcal{M}_\epsilon(T)) $.
\end{lem}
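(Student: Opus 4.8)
The plan is to establish \eqref{nbh} and \eqref{nbha} separately: \eqref{nbh} is a one–derivative gain obtained from a div--curl (Hodge) estimate, and \eqref{nbha} follows from \eqref{nbh} by induction on $\sigma$, with the operator $L_{E^\epsilon}(\partial_x)=\{E^\epsilon\}^{-1}L(\partial_x)$ iterated $\sigma$ times. Throughout it is convenient to read both estimates as statements about an arbitrary pair $(f,\mathbf{g})$ (scalar, vector) rather than only about $\U^\epsilon=(q^\epsilon,\u^\epsilon)$: for such a pair $L(\partial_x)(f,\mathbf{g})=(\dv\mathbf{g},\nabla f)^\top$ and the relevant rotational slot is $\cu(r_0\mathbf{g})$. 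This generality is exactly what makes the induction close.

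For \eqref{nbh}, note first that $L(\partial_x)\U^\epsilon=(\dv\u^\epsilon,\nabla q^\epsilon)^\top$, so $\|L(\partial_x)\U^\epsilon\|_{\sigma-1}$ already controls $\dv\u^\epsilon$ and $\nabla q^\epsilon$ in $H^{\sigma-1}$. For the scalar component this suffices, since $\|q^\epsilon\|_\sigma\le K(\|q^\epsilon\|_{\sigma-1}+\|\nabla q^\epsilon\|_{\sigma-1})$. For the velocity I would invoke the elementary Fourier--space identity on $\R^d$,
\[
\|\u^\epsilon\|_\sigma\le K\big(\|\u^\epsilon\|_{\sigma-1}+\|\dv\u^\epsilon\|_{\sigma-1}+\|\cu\u^\epsilon\|_{\sigma-1}\big),
\]
which recovers the full gradient from divergence and curl. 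The divergence is supplied by $L(\partial_x)\U^\epsilon$; the curl is not, which is precisely why the term $\|\cu(r_0\u^\epsilon)\|_{\sigma-1}$ must enter. To pass from $\cu\u^\epsilon$ to $\cu(r_0\u^\epsilon)$ I use \eqref{vbc} together with the fact that $r_0=r^\epsilon(S^\epsilon,0)$ is smooth, positive and bounded away from zero, obtaining
\[
\cu\u^\epsilon=\tfrac{1}{r_0}\cu(r_0\u^\epsilon)+\tfrac{1}{r_0}\,\nabla r_0\times\u^\epsilon,
\]
so that the product estimate \eqref{nay} and the nonlinear estimate \eqref{naz} give $\|\cu\u^\epsilon\|_{\sigma-1}\le\tilde{C}\big(\|\cu(r_0\u^\epsilon)\|_{\sigma-1}+\|\u^\epsilon\|_{\sigma-1}\big)$. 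Here $\tilde{C}$ is a smooth function of $\|r_0\|_s,\|1/r_0\|_s$ and $\|\nabla r_0\|_{s-1}$, hence of $\|S^\epsilon\|_s$; Lemma \ref{NLa} then yields exactly the claimed shape $\tilde{C}=C_1+tC(\mathcal{M}_\epsilon(T))+\epsilon C(\mathcal{M}_\epsilon(T))$, the fixed part $C_1$ coming from the initial bound $\|S^\epsilon_0\|_s\le M_0$ and the two vanishing corrections from the growth in time and from the $\epsilon^2$–viscous source in \eqref{nan}. Combining the scalar and velocity bounds proves \eqref{nbh}.

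For \eqref{nbha} I would induct on $\sigma$. The base case $\sigma=1$ uses $L(\partial_x)=E^\epsilon L_{E^\epsilon}(\partial_x)$, so $\|L(\partial_x)\U^\epsilon\|_0\le\tilde{C}\|L_{E^\epsilon}(\partial_x)\U^\epsilon\|_0$ by \eqref{nay}, turning \eqref{nbh} into \eqref{nbha}. For the inductive step set $\W:=L_{E^\epsilon}(\partial_x)\U^\epsilon=\big(\tfrac{1}{a^\epsilon}\dv\u^\epsilon,\tfrac{1}{r^\epsilon}\nabla q^\epsilon\big)$, again a (scalar, vector) pair. Applying \eqref{nbh} to $\U^\epsilon$ and writing $L(\partial_x)\U^\epsilon=E^\epsilon\W$ gives $\|\U^\epsilon\|_\sigma\le K\|E^\epsilon\W\|_{\sigma-1}+\tilde{C}(\cdots)\le\tilde{C}\|\W\|_{\sigma-1}+\tilde{C}(\cdots)$; I then apply the inductive hypothesis \eqref{nbha} (at level $\sigma-1$, in its general–pair form) to $\W$. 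Its top–order term is $\|\{L_{E^\epsilon}\}^{\sigma-1}\W\|_0=\|\{L_{E^\epsilon}\}^{\sigma}\U^\epsilon\|_0$, while $\|\W\|_{\sigma-2}\lesssim\|\U^\epsilon\|_{\sigma-1}$ is lower order, so both fold into the right–hand side of \eqref{nbha}.

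The main obstacle is the rotational slot generated for $\W$ at each stage, namely $\cu\big(r_0\tfrac{1}{r^\epsilon}\nabla q^\epsilon\big)$. By \eqref{vbc} and $\cu\nabla=0$ this equals $-\nabla(r_0/r^\epsilon)\times\nabla q^\epsilon$. The saving feature is that $r_0=r^\epsilon(S^\epsilon,0)$ and $r^\epsilon=r^\epsilon(S^\epsilon,\epsilon q^\epsilon)$ differ only through the second slot, so $r_0/r^\epsilon-1=\epsilon q^\epsilon H(S^\epsilon,\epsilon q^\epsilon)$ for a smooth $H$; hence $\nabla(r_0/r^\epsilon)$ carries an explicit factor $\epsilon$, and after discarding the vanishing term $\nabla q^\epsilon\times\nabla q^\epsilon$ the whole slot reduces to $\epsilon q^\epsilon\,\partial_S H\,(\nabla S^\epsilon\times\nabla q^\epsilon)$, a genuinely lower–order quantity bounded by $\epsilon C(\mathcal{M}_\epsilon(T))\|\U^\epsilon\|_{\sigma-1}$. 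Thus it is absorbed into the $\epsilon C(\mathcal{M}_\epsilon(T))$ part of $\tilde{C}$ and into the lower–order term, and the induction closes. I expect the most delicate part of the full write–up to be precisely this bookkeeping: tracking the entropy– and $\epsilon q^\epsilon$–dependent coefficients through the product and commutator estimates so that the final constant retains the exact form $C_1+tC(\mathcal{M}_\epsilon(T))+\epsilon C(\mathcal{M}_\epsilon(T))$ asserted in the statement.
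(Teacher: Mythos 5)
Your proposal is correct and takes essentially the same route as the paper, which proves Lemma \ref{NLi} precisely by invoking ``the same analysis as in \cite{MS01}'': a div--curl (Helmholtz-type) elliptic estimate giving \eqref{nbh}, followed by induction on $\sigma$ with the iterated operator $L_{E^\epsilon}(\partial_x)$ giving \eqref{nbha}. In particular, your two key ingredients --- passing between $\cu\u^\epsilon$ and $\cu(r_0\u^\epsilon)$ via \eqref{vbc}, and the factor $\epsilon$ gained in the rotational slot $\cu\big(r_0\{r^\epsilon\}^{-1}\nabla q^\epsilon\big)=-\nabla(r_0/r^\epsilon)\times\nabla q^\epsilon$ together with Lemma \ref{NLa} to put $\tilde C$ in the stated form $C_1+tC(\mathcal{M}_\epsilon(T))+\epsilon C(\mathcal{M}_\epsilon(T))$ --- are exactly what the paper is pointing to when it remarks that these inequalities are ``similar to the well known Helmholtz decomposition'' and that the entropy estimate of Lemma \ref{NLa} ``plays a key role.''
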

We remark that the inequalities \eqref{nbh} and \eqref{nbha} are similar to the well known Helmholtz decomposition,  and the estimate on $\|S^\epsilon(t)\|^2_{s}$ in Lemma
\ref{NLa} plays a key role in the proof of Lemma \ref{NLi}.

Our next task is to derive a bound on
$\|\{L_{E^\epsilon}(\partial_x)\}^\sigma  {\U}^\epsilon\|_{0}$ and
$\|\cu(r_0\u^\epsilon)\|_{\sigma-1}$ by induction arguments. Let
$\W_\sigma^\epsilon=\{L_{E^\epsilon}(\partial_x)\}^\sigma(0,\u^\epsilon)^\top$.
We first show the following estimate.

\begin{lem}\label{NLee}
  There exist a sufficiently small constant $\eta_1>0$ and two constants
  $C_0>0$, $0<\xi_2<\mu$, and a function $C(\cdot)$
from $[0,\infty)$ to $[0,\infty)$, independent of $\epsilon$,  such
that for all $\sigma \in [1,\dots, s]$ and  $t\in [0,T]$,
\begin{align}\label{nbeea}
\| \{L_{E^\epsilon}(\partial_x)\}^\sigma  {\U}^\epsilon(t)\|^2
&+\frac{\xi_2}{2}\int_0^t\|\nabla\W_\sigma^\epsilon\|^2(\tau)d\tau \nonumber\\
& \leq C_0
+ tC(\mathcal{N}_\epsilon(T)) +\eta_1\int_0^t\|\u^\epsilon(\tau)\|_{s+1}^2d\tau.
\end{align}
\end{lem}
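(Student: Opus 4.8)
The plan is to run a weighted energy estimate for the symmetrized unknown $\mathbf V^\epsilon:=\{L_{E^\epsilon}(\partial_x)\}^\sigma\U^\epsilon$, exploiting that under the $E^\epsilon$-weighted inner product the singular operator $\epsilon^{-1}L(\partial_x)$ becomes skew-adjoint. First I would write the $(q^\epsilon,\u^\epsilon)$-block \eqref{nak}--\eqref{nal} in the compact form
\[
E^\epsilon(\partial_t+\u^\epsilon\cdot\nabla)\U^\epsilon+\frac{1}{\epsilon}L(\partial_x)\U^\epsilon=\big(0,\ (\cu\H^\epsilon)\times\H^\epsilon+\dv\Psi^\epsilon\big)^\top,
\]
multiply by $\{E^\epsilon\}^{-1}$, apply $\{L_{E^\epsilon}(\partial_x)\}^\sigma$, and commute this operator past $\partial_t+\u^\epsilon\cdot\nabla$, so that $\mathbf V^\epsilon$ satisfies a transport equation with the singular term $\epsilon^{-1}L_{E^\epsilon}(\partial_x)\mathbf V^\epsilon$, a viscous--magnetic forcing, and a commutator remainder.

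Next I would pair the resulting equation with $E^\epsilon\mathbf V^\epsilon$ in $L^2$. The decisive cancellation is
\[
\Big\langle E^\epsilon\,\tfrac{1}{\epsilon}L_{E^\epsilon}(\partial_x)\mathbf V^\epsilon,\ \mathbf V^\epsilon\Big\rangle=\frac{1}{\epsilon}\langle L(\partial_x)\mathbf V^\epsilon,\mathbf V^\epsilon\rangle=0,
\]
since $L(\partial_x)$ is skew-symmetric; this is precisely why the weight $E^\epsilon$ and the operator $\{E^\epsilon\}^{-1}L$ are used, and it removes the $1/\epsilon$ singularity. The time derivative yields $\tfrac12\tfrac{d}{dt}\langle E^\epsilon\mathbf V^\epsilon,\mathbf V^\epsilon\rangle$ together with a term bounded by $\|\partial_tE^\epsilon\|_{L^\infty}\|\mathbf V^\epsilon\|^2\le C(\mathcal N_\epsilon(T))\|\mathbf V^\epsilon\|^2$ via \eqref{nbbb}, and the convective term, after one integration by parts, is likewise $\le C(\mathcal N_\epsilon(T))\|\mathbf V^\epsilon\|^2$. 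The commutator $[\{L_{E^\epsilon}(\partial_x)\}^\sigma,\partial_t+\u^\epsilon\cdot\nabla]\U^\epsilon$ I would estimate with \eqref{comc}, \eqref{nay}, \eqref{naz}, Lemma \ref{NLa} and \eqref{nbba}; because the coefficients of $L_{E^\epsilon}$ depend on $(S^\epsilon,\epsilon q^\epsilon)$, the pieces generated by $\partial_t(\epsilon q^\epsilon)$ carry an explicit factor $\epsilon$, and after integrating in time all of these collect into $C_0+tC(\mathcal N_\epsilon(T))$.

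For the forcing I would examine first the viscous contribution $\{L_{E^\epsilon}(\partial_x)\}^\sigma\{E^\epsilon\}^{-1}(0,\dv\Psi^\epsilon)^\top$ paired with $\mathbf V^\epsilon$; since $\dv\Psi^\epsilon$ is the dissipative second-order operator acting on $\u^\epsilon$, this produces to leading order the good term $-\xi_2\|\nabla\W_\sigma^\epsilon\|^2$ (coercive by $\mu>0$ and $2\mu+d\lambda>0$, exactly as in Lemma \ref{NLb}), with $\W_\sigma^\epsilon=\{L_{E^\epsilon}(\partial_x)\}^\sigma(0,\u^\epsilon)^\top$, which I keep on the left-hand side. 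The residual commutators between $\{L_{E^\epsilon}(\partial_x)\}^\sigma$ and $\{E^\epsilon\}^{-1}\dv\Psi^\epsilon$ carry coefficient derivatives multiplying up to $s+1$ derivatives of $\u^\epsilon$; these I would split by Young's inequality into a small multiple $\eta_1\|\u^\epsilon\|_{s+1}^2$, left on the right-hand side for absorption at a later stage, and a factor controlled by $C(\mathcal N_\epsilon(T))$.

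The main obstacle is the magnetic force $(\cu\H^\epsilon)\times\H^\epsilon$: with zero magnetic diffusion there is no bound on $\|\H^\epsilon\|_{s+1}$, yet a literal $\sigma$-fold application of $\{L_{E^\epsilon}(\partial_x)\}^\sigma$ places $\sigma+1$ derivatives on $\H^\epsilon$. The key step is to transfer one derivative off $\H^\epsilon$ and onto the $\u^\epsilon$-generated iterate $\W_\sigma^\epsilon$ by integrating by parts, using the vector identities \eqref{va}--\eqref{vc} and the constraint $\dv\H^\epsilon=0$, so that the worst contribution takes the form $\|\H^\epsilon\|_s\,\|\nabla\W_\sigma^\epsilon\|$; a second Young's inequality then splits this into a small multiple of $\|\nabla\W_\sigma^\epsilon\|^2$, absorbed by the dissipation, and a term bounded by $C(\mathcal N_\epsilon(T))$. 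Finally, choosing the Young parameters (hence $\delta$ and $\eta_1$) small enough that all the $\|\nabla\W_\sigma^\epsilon\|^2$ contributions leave at least $\tfrac{\xi_2}{2}$ on the left, and integrating over $[0,t]$, yields \eqref{nbeea}.
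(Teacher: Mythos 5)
Your proposal follows the paper's proof essentially step for step: the same $E^\epsilon$-weighted energy estimate for $\{L_{E^\epsilon}(\partial_x)\}^\sigma\U^\epsilon$ with the skew-adjointness cancellation of the $1/\epsilon$ term, the same key integration by parts that transfers one derivative off $\H^\epsilon$ onto the velocity-generated iterate $\W_\sigma^\epsilon$ (the paper's $B_1$ term), and the same extraction of the dissipation $\|\nabla\W_\sigma^\epsilon\|^2$ from the viscous forcing via the commutator expansion of $[E^{-1}\Delta,\{L_E\}^\sigma]$. The only deviations are bookkeeping — you absorb the magnetic contribution into the dissipation while the paper records it as $\eta_1\int_0^t\|\u^\epsilon\|_{s+1}^2d\tau$, and vice versa for the viscous commutators — so the argument is correct and not genuinely different.
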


\begin{proof} Let $\U^\epsilon_\sigma:=\{L_{E^\epsilon}(\partial_x)\}^\sigma \U^\epsilon$,
$\sigma\in \{0,\dots, s\}.$ For simplicity, we set $\mathcal{M}:=\mathcal{M}_\epsilon(T)$,
$\mathcal{N}:=\mathcal{N}_\epsilon(T)$, and $E:= E^\epsilon(S^\epsilon,\epsilon q^\epsilon)$.
 The case $k =0$ is an immediate consequence of Lemma \ref{NLb}.
It is easy to verify that the operator $L_E(\partial_x )$ is
bounded from $H^{k}$ to $H^{k-1}$ for $k \in \{1,\dots, s+1\}$. Note that
the equations \eqref{nak}, \eqref{nal} can be written as
\begin{align}\label{nbeb}
  (\partial_t+\u^\epsilon\cdot \nabla)\U^\epsilon+\frac{1}{\epsilon}E^{-1}L(\partial_x)\U^\epsilon=
  E^{-1}(\mathbf{J}^\epsilon+\mathbf{V}^\epsilon)
\end{align}
with
\begin{gather*}
\mathbf{J}^\epsilon=\left(\begin{array}{c}
                 0 \\
                ( \cu{\H^\epsilon}) \times {\H^\epsilon}
                 \end{array}
                 \right),\quad
\mathbf{V}^\epsilon=\left(\begin{array}{c}
                 0 \\
                \dv \Psi^\epsilon
                 \end{array}
                 \right).
\end{gather*}

 For $ \sigma \geq 1$, we commute the operator $\{L_E\}^\sigma$ with  \eqref{nbeb} and
 multiply the resulting equation by $E$ to infer that
\begin{align}\label{nbeeb}
  E(\partial_t+\u^\epsilon\cdot \nabla)\U^\epsilon_\sigma
  +\frac{1}{\epsilon}L(\partial_x)\U^\epsilon_\sigma=E(\mathbf{f}_\sigma+
  \mathbf{g}_\sigma+\mathbf{h}_\sigma),
\end{align}
where
\begin{align*}
\mathbf{f}_\sigma:=&   [\partial_t+\u^\epsilon\cdot \nabla , \{L_E\}^\sigma] \U^\epsilon,\\
\mathbf{g}_\sigma:=&  \{L_E\}^\sigma (E^{-1} \mathbf{J}^\epsilon),\\
\mathbf{h}_\sigma:=&  \{L_E\}^\sigma (E^{-1} \mathbf{V}^\epsilon).
   \end{align*}

Multiplying \eqref{nbeeb} by $ \U^\epsilon_\sigma$ and integrating
over $[0,t]\times \R^d$ with $t\leq T$, noticing that
 the singular terms cancel out since $L(\partial_x)$ is skew-adjoint, we use
the inequalities \eqref{nbba} and \eqref{nbbb}, and Cauchy-Schwarz's
inequality to deduce that
\begin{align}\label{nbeec}
  \langle E(t)\U_\sigma^\epsilon(t), \U_\sigma^\epsilon(t)\rangle
\leq & \langle E(0)\U_\sigma^\epsilon(0),
\U_\sigma^\epsilon(0)\rangle
         + C(\mathcal{M})\int^t_0 \|\U^\epsilon_\sigma(\tau)\|^2 d\tau \nonumber\\
  & + \int^t_0 \|\mathbf{f}_\sigma(\tau)\|^2 d\tau+
2\int^t_0\int_{\R^d}(E (\mathbf{g}_\sigma+\mathbf{h}_\sigma)\U_\sigma^\epsilon)(\tau) d\tau.
\end{align}
Following the proof process of Lemma 2.4 in \cite{MS01}, we obtain that
\begin{align}\label{nbeef}
 \|\mathbf{f}_k(t)\|\leq C(\mathcal{N}_\epsilon(t)).
\end{align}

Now we estimate the nonlinear term in \eqref{nbeec} involving
$\mathbf{g}_\sigma$. We expand $\mathbf{g}_\sigma$ as follows
\begin{align*}
   \mathbf{g}_\sigma=& \, \sum^d_{i,j=1}\sum_{|\alpha|=\sigma+1}
   \{E^{-1}\}^{k+1}\partial_x^\alpha H_i^\epsilon H^\epsilon_j\nonumber\\
   & + \sum^d_{i,j=1}\sum_{\Lambda_1}  \sum_{\Lambda_2}\{E^{-1}\}^l
   \partial_x^{\beta_1}\{E^{-1}\}\cdots
   \partial_x^{\beta_k}\{E^{-1}\}\partial_x^\gamma H_i^\epsilon \partial_x^\delta H^\epsilon_j \nonumber\\
:=&\, B_1+B_2,
\end{align*}
where
 \begin{gather*}
\Lambda_1   =\{(\beta_1,\cdots,\beta_k,\gamma,\delta)\big|
|\beta_1|+\cdots+|\beta_k|+|\gamma|+|\delta|\leq k+1, 0<|\gamma|
\leq k, |\delta|\leq k\},\\
\Lambda_2   =\{l\big|l=k+1-(|\beta_1|+\cdots+|\beta_k|),
(\beta_1,\cdots,\beta_k,0,0)\in \Lambda_1\}.
\end{gather*}
Since there is no magnetic diffusion in the system, we cannot deal
with directly the terms involving $B_1$. Instead, we
transform one spatial derivative to $\U_\sigma^\epsilon$. Integrating by parts, we have
\begin{align}
\int_{\R^d} (E B_1\U_\sigma^\epsilon)(\tau)dx=&
-\sum^d_{i,j=1}\sum_{|\alpha|=\sigma} \int_{\R^d}
\{E^{-1}\}^{k}\partial_x^\alpha  H_i^\epsilon
\partial_x H^\epsilon_j\U_\sigma^\epsilon d x \nonumber\\
&-\sum^d_{i,j=1}\sum_{|\alpha|=\sigma} \int_{\R^d}
\partial_x\{E^{-1}\}^{k}\partial_x^\alpha  H_i^\epsilon
H^\epsilon_j\U_\sigma^\epsilon dx \nonumber\\
&-\sum^d_{i,j=1}\sum_{|\alpha|=\sigma}\int_{\R^d}
\{E^{-1}\}^{k}\partial_x^\alpha  H_i^\epsilon
 H^\epsilon_j\partial_x(\U_\sigma^\epsilon) dx \nonumber\\
 \leq &C(\mathcal{N})+\eta_1 \|\u^\epsilon(\tau)\|_{s+1}^2
\end{align}
for sufficiently small constant $\eta_1>0$.

 By virtue of Cauchy-Schwarz's and Sobolev's inequalities, and \eqref{nbbb},
a direct computation implies that
\begin{align}
 \int_{\R^d}|(E B_2\U_\sigma^\epsilon)(\tau)|d\tau\leq C(\mathcal{N}).
 \label{nbeeh}
\end{align}

Next, we deal with the term involving the viscosity. Recall that
$L(\partial_x) \U^\epsilon=(\dv \u^\epsilon, \nabla q^\epsilon)$. Denote
\begin{align}
L_1:= \{a^\epsilon\}^{-1}\dv,\;\;\;\;
L_2:=\{r^\epsilon\}^{-1}\nabla. \nonumber
\end{align}
A straightforward computation implies that
\begin{align*} 
  \U^\epsilon_k= \left\{
                   \begin{array}{ll}
                   \left(\begin{array}{c}
                   \{L_1L_2\}^{\frac{k-1}{2}} L_1  \u^\epsilon \\
                   \{L_2 L_1\}^{\frac{k-1}{2}} L_2  q^\epsilon
                    \end{array}
                    \right)  , &  \text{if}\  k  \ \text{is  odd}; \\[-1.0ex]\\
                    \left(\begin{array}{c}
                   \{L_1 L_2\}^{k/2} q^\epsilon \\
                   \{L_2 L_1\}^{k/2} \u^\epsilon
                    \end{array}
                    \right), & \hbox{\text{if}}\  k \  \text{is even}.
                   \end{array}
                 \right.
  \end{align*}
Thus, we induce that
\begin{align*}
\int^t_0\int_{\R^d} (E \mathbf{h}_\sigma \U_\sigma^\epsilon)(\tau)
dxd\tau=\int^t_0\int_{\R^d} E\{L_E\}^\sigma (E^{-1}
\mathbf{V}^\epsilon)\W_\sigma^\epsilon dxd\tau.
\end{align*}
An integration by parts gives
\begin{align*}
& \int^t_0\int_{\R^d} EL^\sigma_E (E^{-1}
\mathbf{V}^\epsilon)\W_\sigma^\epsilon dx
d\tau\\
= &-\int^t_0\int_{\R^d}\mu|\nabla\W_\sigma^\epsilon|^2+
(\mu+\lambda)|\dv\W_\sigma^\epsilon|^2dxd\tau\\
& +\int^t_0\int_{\R^d}E[\mu E^{-1}\Delta+(\mu+\lambda) E^{-1}\nabla\dv,
\{L_E\}^\sigma ](0,\u^\epsilon)^T\W_\sigma^\epsilon dxd\tau ,
\end{align*}
where it is easy to verify that
\begin{align}\label{viscosity}
[E^{-1}\Delta,\{L_E\}^\sigma ]=\sum_{i=0}^{k-1}\{L_E\}^i[E^{-1}\Delta,L_E]\{L_E\}^{\sigma-i-1}.
\end{align}

Noting that $L_E(\partial_x )= {E}^{-1}L(\partial_x)$, we find that
\begin{align*}
[E^{-1}\Delta,L_E]=-E^{-1}\Delta
E^{-1}L(\partial_x)+\sum_{i,j=1}^dB_{ij}\partial_{x_{ij}},
\end{align*}
where $B_{ij}$ $(i,j=1,\cdots, d$) are the sums of bilinear functions
of $E^{-1}$ and $\partial_x \{E^{-1}\}$, and Sobolev's inequalities imply that
\begin{align*}
\|B_{ij}\|_{s-1}\leq C(\mathcal{N}).
\end{align*}
Thus, we integrate by parts to infer that
\begin{align*}
&\mu\int^t_0\int_{\R^d}E[ E^{-1}\Delta,
\{L_E\}^\sigma](0,\u^\epsilon)^\top\W_\sigma^\epsilon dxd\tau \\
 \leq & \frac{\mu}{2}\int_0^t\|\nabla
\W_\sigma^\epsilon\|^2d\tau+C(\mathcal{N})\int_0^t\|\W_\sigma^\epsilon(\tau)\|^2d\tau\\
&  +\int_0^t\|\tilde{H}_2^\sigma(\tau)\|^2+\|\tilde{H}_1^\sigma(\tau)\|^2 d\tau.
\end{align*}
Here $\tilde{H}_1^\sigma$ is a finite sum of terms of the form
$$ (\partial^{\alpha_1}_x e_1)\cdots (\partial^{\alpha_l}_x e_l) \,
(\partial^{\beta}_x w)\,(\partial^{\gamma}_x u^\epsilon_m)
$$
with $|\alpha_1|+\cdots +|\alpha_l|+|\beta|+|\gamma|\leq \sigma\leq
s$, $|\gamma|> 0$, and thus $|\beta|\leq k-1\leq s-1$, where
$(e_1, \dots, e_l)$, $w$ and $u^\epsilon_m$ denote the
coefficients of $E^{-1}$, $C_j$ and $\u^\epsilon$ respectively,
with $C_j$ taking a form similar to that of $B_{ij}$.  $\tilde{H}_2^\sigma$ is a
finite sum of terms of the form
$$
(\partial^{\alpha_1}_x e_1)\cdots (\partial^{\alpha_l}_x e_l) \,
(\partial^{\beta}_x w)\,(\partial^{\gamma}_x u^\epsilon_m)
$$
with $|\alpha_1|+\cdots +|\alpha_l|+|\beta|+|\gamma|\leq
\sigma+1\leq s+1$, $|\gamma|> 1$, and thus $|\beta|\leq \sigma-1\leq s-1$,
where $(e_1, \dots, e_l)$, $w$ and $u^\epsilon_m$
denote the coefficients of $E^{-1}$, $B_{ij}$ and $\u^\epsilon$ respectively.
Hence, we have
\begin{align*}
\|\tilde{H}_1^\sigma\|^2+\|\tilde{H}_2^\sigma\|^2\leq C(\mathcal{M}).
\end{align*}

Similarly, we can show that
\begin{align*}
&(\mu+\lambda)\int^t_0\int_{\R^d}E[ E^{-1}\nabla\dv,
\{L_E\}^\sigma](0,\u^\epsilon)^T\W_\sigma^\epsilon dxd\tau\\
& \qquad\quad  \leq
\frac{\mu+\lambda}{2}\int_0^t\|\dv\W_\sigma^\epsilon\|^2d\tau  +
C(\mathcal{N})\int_0^t\|\W_\sigma^\epsilon\|^2d\tau+tC(\mathcal{M}).
\end{align*}

Finally, the above estimates \eqref{nbeef}--\eqref{nbeeh} and the
positivity of $E$ imply \eqref{nbeea}.
\end{proof}

Next, we derive an estimate for
$\|\cu(r_0\u^\epsilon)\|_{\sigma-1}$. Define
\begin{align}\label{factor}
 f^\epsilon(S^\epsilon,\epsilon q^\epsilon):
 =1-\frac{r_0(S^\epsilon)}{r^\epsilon(S^\epsilon, \epsilon q^\epsilon)}.
\end{align}
Hereafter we denote $r_0(t) := r_0(S^\epsilon(t))$ and
$f^\epsilon(t):= f^\epsilon(S^\epsilon(t), \epsilon q^\epsilon(t))$
for notational simplicity.

One can factor out $\epsilon q^\epsilon$ in $f^\epsilon(t)$. In fact,
using Taylor's expansion, one obtains that there exists a smooth
function $g^\epsilon(t)$, such that
\begin{align}\label{nbfe}
f^\epsilon(t)=\epsilon g^\epsilon(t):= \epsilon
g^\epsilon(S^\epsilon(t), \epsilon q^\epsilon(t)), \quad
\|g^\epsilon(t)\|_s\leq  C(\mathcal{M}_\epsilon(T)).
\end{align}
Since
$$\partial_t S^\epsilon + (\u^\epsilon\cdot\nabla)S^\epsilon = \epsilon^2
\frac{\Psi^\epsilon:\nabla\u^\epsilon}{b^\epsilon(S^\epsilon,\epsilon q^\epsilon)},
$$
 the equations for $\u^\epsilon$ are equivalent to
\begin{align}\label{nbff}
[\partial_t + (\u^\epsilon\cdot \nabla)] (r_0\u^\epsilon)
+\frac{1}{\epsilon}\nabla q^\epsilon  = &g^\epsilon\nabla q^\epsilon
+ (1-\epsilon g^\epsilon)( \cu{\H^\epsilon}) \times {\H^\epsilon}
\nonumber\\
&+(1-\epsilon g^\epsilon)\dv \Psi^\epsilon+\epsilon^2  \, r'_0(S^\epsilon)\u^\epsilon \frac{\Psi^\epsilon
 :\nabla\u^\epsilon}{b^\epsilon(S^\epsilon,\epsilon q^\epsilon)}.
\end{align}

We perform the operator \emph{curl} to the equation \eqref{nbff} to obtain that
\begin{align}\label{nbfg}
&  [\partial_t +(\u^\epsilon\cdot \nabla)](\cu(r_0\u^\epsilon))\nonumber\\
 = &
[\u^\epsilon\cdot \nabla, \cu](r_0\u^\epsilon) + \cu [(1-\epsilon
g^\epsilon)( \cu{\H^\epsilon}) \times {\H^\epsilon}]\nonumber\\
& +\cu [(1-\epsilon g^\epsilon)\dv \Psi^\epsilon] + [\cu,
g^\epsilon]\nabla q^\epsilon\nonumber\\
&  +\epsilon^2  \cu \left\{ r'_0(S^\epsilon)\u^\epsilon \frac{\Psi^\epsilon
 :\nabla\u^\epsilon}{b^\epsilon(S^\epsilon,\epsilon q^\epsilon)}\right\}.
  \end{align}

\begin{lem}\label{NLg}
  There exist constants $C_0>0$, $0< \xi_3< \mu$, a function $C(\cdot)$
from $[0,\infty)$ to $[0,\infty)$ and a sufficiently small constant
$\eta_2>0$, such that for all $\epsilon \in (0, 1]$ and all $t\in [0,T]$,
\begin{align}\label{nbga}
\|\{\cu (r_0\u^\epsilon),\cu \H^\epsilon\}(t)\|^2_{s-1}
& + \xi_3\int_0^t\|\nabla\cu\u^\epsilon\|_{s-1}^2d\tau \nonumber\\
\leq & C_0 + tC(\mathcal{N}_\epsilon(T))+\eta_2\int_0^t\|\u^\epsilon\|^2_{s+1}d\tau.
\end{align}
\end{lem}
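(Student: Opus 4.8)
The plan is to run a single combined $H^{s-1}$ energy estimate for the pair $\big(\cu(r_0\u^\epsilon),\cu\H^\epsilon\big)$, using the evolution equation \eqref{nbfg} for $\cu(r_0\u^\epsilon)$ together with the equation for $\cu\H^\epsilon$ obtained by applying \emph{curl} to \eqref{nam}. By \eqref{naff} and $\dv\H^\epsilon=0$, the induction equation \eqref{nam} reads $(\partial_t+\u^\epsilon\cdot\nabla)\H^\epsilon=(\H^\epsilon\cdot\nabla)\u^\epsilon-\H^\epsilon\dv\u^\epsilon$; applying \emph{curl} and setting $\t^\epsilon:=\cu\H^\epsilon$ gives
\begin{align*}
(\partial_t+\u^\epsilon\cdot\nabla)\t^\epsilon
=(\H^\epsilon\cdot\nabla)\cu\u^\epsilon-\cu(\H^\epsilon\,\dv\u^\epsilon)
+[\cu,\H^\epsilon\cdot\nabla]\u^\epsilon-[\cu,\u^\epsilon\cdot\nabla]\H^\epsilon.
\end{align*}
For each multi-index $\alpha$ with $|\alpha|\le s-1$ I would apply $\partial_x^\alpha$ to \eqref{nbfg} and to this equation, test the first against $\partial_x^\alpha\cu(r_0\u^\epsilon)$ and the second against $\partial_x^\alpha\t^\epsilon$, integrate over $\R^d$, and sum. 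The singular term $\epsilon^{-1}\nabla q^\epsilon$ has already been annihilated by the curl in passing from \eqref{nbff} to \eqref{nbfg}, and the convective operators contribute, after integrating by parts, only harmless factors $\dv\u^\epsilon\in L^\infty$, hence $C(\mathcal{N}_\epsilon(T))\|\cu(r_0\u^\epsilon),\cu\H^\epsilon\|_{s-1}^2\le C(\mathcal{N}_\epsilon(T))$.

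The heart of the matter is the top-order coupling. Using \eqref{vb} together with $\dv\H^\epsilon=0$ and $\dv\cu\H^\epsilon=0$, the Lorentz-force curl in \eqref{nbfg} reduces at leading order to $\cu[(\cu\H^\epsilon)\times\H^\epsilon]=(\H^\epsilon\cdot\nabla)\t^\epsilon-(\t^\epsilon\cdot\nabla)\H^\epsilon$, so the velocity-curl equation carries the dangerous term $(\H^\epsilon\cdot\nabla)\t^\epsilon$, which contains $\nabla^{s+1}\H^\epsilon$ and is controlled by no dissipation. I would pair this, at top order, with the term $(\H^\epsilon\cdot\nabla)\cu\u^\epsilon$ in the $\t^\epsilon$-equation: writing $\partial_x^\alpha\cu(r_0\u^\epsilon)=r_0\,\partial_x^\alpha\cu\u^\epsilon$ modulo lower-order commutators and integrating the transport operator $\H^\epsilon\cdot\nabla$ by parts, the two contributions combine into $-\int\dv(r_0\H^\epsilon)\,(\partial_x^\alpha\t^\epsilon\cdot\partial_x^\alpha\cu\u^\epsilon)\,dx+\int(1-r_0)(\H^\epsilon\cdot\nabla)\partial_x^\alpha\cu\u^\epsilon\cdot\partial_x^\alpha\t^\epsilon\,dx$. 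Since $\dv(r_0\H^\epsilon)=\nabla r_0\cdot\H^\epsilon$ carries no extra derivative, the first integral is bounded by $C(\mathcal{N}_\epsilon(T))$; the second involves only $\nabla^{s+1}\u^\epsilon$ and is absorbed into $\eta_2\|\u^\epsilon\|_{s+1}^2+C(\mathcal{N}_\epsilon(T))$. Thus the uncontrollable $\nabla^{s+1}\H^\epsilon$ is removed \emph{exactly} by the MHD cancellation encoded in $\dv\H^\epsilon=0$; every surviving top-order derivative has been transferred onto $\u^\epsilon$.

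The viscous term supplies the dissipation: since $\cu\nabla=0$ one has $\cu\dv\Psi^\epsilon=\mu\Delta\cu\u^\epsilon$, so testing $\partial_x^\alpha\cu[(1-\epsilon g^\epsilon)\dv\Psi^\epsilon]$ against $\partial_x^\alpha\cu(r_0\u^\epsilon)$ and integrating by parts yields $-\mu\int r_0|\nabla\partial_x^\alpha\cu\u^\epsilon|^2\le-\xi_3\|\nabla\partial_x^\alpha\cu\u^\epsilon\|^2$ modulo commutators with $\partial_x^\alpha$, the factor $\nabla r_0$, and the $\epsilon g^\epsilon$ correction (the latter producing $\epsilon$- and lower-order terms via \eqref{nbfe}). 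Summing over $|\alpha|\le s-1$ produces $\xi_3\int_0^t\|\nabla\cu\u^\epsilon\|_{s-1}^2\,d\tau$ on the left. All remaining terms — the commutators $[\u^\epsilon\cdot\nabla,\cu](r_0\u^\epsilon)$, $[\cu,g^\epsilon]\nabla q^\epsilon$, $(\t^\epsilon\cdot\nabla)\H^\epsilon$, $\cu(\H^\epsilon\dv\u^\epsilon)$, the two commutators in the $\t^\epsilon$-equation, and the $\epsilon^2$ nonlinearities — are handled by the commutator inequality \eqref{comc}, the product estimate \eqref{nay} (valid since $s-1>d/2$), and Sobolev embedding, each giving either $C(\mathcal{N}_\epsilon(T))$ or a further $\eta_2\|\u^\epsilon\|_{s+1}^2$ coming from stray $\nabla^{s+1}\u^\epsilon$ factors. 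Integrating in time, using the lower bound and positivity of $r_0$, and collecting these bounds yields \eqref{nbga}. The main obstacle is precisely the step of the second paragraph: verifying that after the $(\H^\epsilon\cdot\nabla)$-cancellation no $\nabla^{s+1}\H^\epsilon$ survives and that every remaining top-order factor falls on $\u^\epsilon$, so that it can be absorbed by the fluid viscosity rather than requiring the (absent) magnetic dissipation.
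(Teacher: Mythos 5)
Your proposal is correct, and its overall skeleton --- apply $\partial_x^\alpha$, $|\alpha|\le s-1$, to the curled velocity equation \eqref{nbfg} and to the curl of \eqref{nam}, run $L^2$ energy estimates, extract the dissipation from $\cu\dv\Psi^\epsilon=\mu\Delta\cu\u^\epsilon$, and arrange that every surviving top-order derivative falls on $\u^\epsilon$ so the viscosity can absorb it --- is the same as the paper's. Where you genuinely diverge is the key step, the Lorentz term. The paper never expands $\cu[(1-\epsilon g^\epsilon)(\cu\H^\epsilon)\times\H^\epsilon]$: in its term $I_6$ it moves the outer curl by integration by parts onto $\partial^\alpha\cu(r_0\u^\epsilon)$ and uses $\cu\cu(r_0\u^\epsilon)=\nabla\dv(r_0\u^\epsilon)-\Delta(r_0\u^\epsilon)$, so that Cauchy--Schwarz pairs a factor carrying at most $s$ derivatives of $\H^\epsilon$ (hence bounded by $C(\mathcal{N}_\epsilon(T))$) against $\|\u^\epsilon\|_{s+1}$, absorbed with a small constant $\theta_1$; correspondingly, its estimate of $\cu\H^\epsilon$ is a separate transport estimate (as in Lemma \ref{NLa}) in which the stretching term $\cu((\H^\epsilon\cdot\nabla)\u^\epsilon-\H^\epsilon\dv\u^\epsilon)$ is bounded directly by $C(\mathcal{N}_\epsilon(T))+\theta_3\|\u^\epsilon\|^2_{s+1}$, and the two estimates are simply added at the end. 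You instead expand the curl via \eqref{vb} together with $\dv\H^\epsilon=\dv\cu\H^\epsilon=0$ to exhibit the dangerous term $(\H^\epsilon\cdot\nabla)\cu\H^\epsilon$, and then kill its $(s{+}1)$-st derivative of $\H^\epsilon$ by the antisymmetric pairing with $(\H^\epsilon\cdot\nabla)\cu\u^\epsilon$ from the magnetic-curl equation --- the classical ideal-MHD cancellation; your bookkeeping of the weights $r_0$ and $1-\epsilon g^\epsilon$, and of $\dv(r_0\H^\epsilon)=\nabla r_0\cdot\H^\epsilon$, is right. As to what each route buys: your coupled cancellation is the structurally more robust argument (it is what one would be forced to use if the fluid viscosity were absent), but given the viscosity it is not actually needed --- once you integrate $\int r_0\,[(\H^\epsilon\cdot\nabla)\partial^\alpha\cu\H^\epsilon]\cdot\partial^\alpha\cu\u^\epsilon\,dx$ by parts using $\dv\H^\epsilon=0$, both resulting terms are already of the admissible form $C(\mathcal{N}_\epsilon(T))+\eta\|\u^\epsilon\|^2_{s+1}$ with no help from the magnetic-curl equation, which is in essence the decoupled route the paper takes; conversely, the paper's version avoids expanding the Lorentz curl altogether and is slightly shorter.
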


\begin{proof}
Set  $\mathcal{N}:=\mathcal{N}_\epsilon(T)$
and  $\omega= \cu (r_0\u^\epsilon)$. Taking $\partial^\alpha_x$
$(|\alpha|\leq s-1)$ on \eqref{nbfg}, multiplying
the resulting equations by $\partial^\alpha_x \omega $, and
integrating over $[0,t]\times \R^d$ with $t\leq T$, we obtain
\begin{align}\label{nbgb}
\frac{1}{2}\|\partial^\alpha \omega(t)\|^2
=\,&\frac{1}{2}\|\partial^\alpha \omega(0)\|^2 -\int^t_0\langle(\u^\epsilon\cdot \nabla)
\partial^\alpha  \omega, \partial^\alpha  \omega\rangle(\tau)d\tau\nonumber\\
&+\int^t_0\langle [\u^\epsilon\cdot \nabla, \partial_x^\alpha]\omega, \partial^\alpha
 \omega\rangle(\tau) d\tau\nonumber\\
& +\int^t_0\langle \partial^\alpha \{ [\cu, g^\epsilon]\nabla q^\epsilon\},
\partial^\alpha  \omega\rangle(\tau) d\tau\nonumber\\
& +\int^t_0 \langle \partial^\alpha \{[\u^\epsilon\cdot \nabla, \cu](r_0\u^\epsilon)\},
  \partial^\alpha  \omega\rangle(\tau) d\tau\nonumber\\
& +\int^t_0\left\langle \partial^\alpha \left\{\epsilon^2\, \cu
  \left\{ r'_0(S^\epsilon)\u^\epsilon \frac{\Psi^\epsilon
 :\nabla\u^\epsilon}{b^\epsilon(S^\epsilon, \epsilon q^\epsilon)} \right\}
  \right\}, \partial^\alpha  \omega\right\rangle(\tau) d\tau\nonumber\\
&+\int^t_0 \langle \partial^\alpha \{\cu [(1-\epsilon g^\epsilon)( \cu{\H^\epsilon})
\times {\H^\epsilon}]\}, \partial^\alpha  \omega\rangle(\tau) d\tau \nonumber\\
&+\int^t_0 \langle \partial^\alpha \{\cu [(1-\epsilon g^\epsilon)\dv \Psi^\epsilon]\},
\partial^\alpha  \omega\rangle(\tau) d\tau \nonumber\\
=&:\frac{1}{2}\|\partial^\alpha \omega(0)\|^2+ \int^t_0\sum_{i=1}^7 I_i(\tau).
 \end{align}
We have to estimate the terms $I_i(\tau)$ ($1\leq i\leq 7$) on the right-hand side of (\ref{nbgb}).
Applying partial integrations, we have
\begin{align}
  I_1(\tau)=\int_{\mathbb{R}^d}  |   \partial^\alpha  \omega  |^2 \dv\u^\epsilon
  \leq \|\dv \u^\epsilon(\tau)\|_{L^\infty}\|\partial^\alpha   \omega(\tau)\|^2
  \leq C(\mathcal{N})\|\partial^\alpha
  \omega(\tau)\|^2 ,  \label{nbgc}
\end{align}
while for the term $I_2(\tau)$, an application of Cauchy-Schwarz's inequality gives
\begin{align*}
 | I_2(\tau)|\leq   \|\partial^\alpha  \omega\| \, \|\mathbf{h}_\alpha(\tau)\|,
 \quad  \mathbf{h}_\alpha(\tau):= [\u^\epsilon\cdot \nabla, \partial_x^\alpha]\omega.
\end{align*}
The commutator $\mathbf{h}_\alpha$ is a sum of terms
$\partial^\beta_x \u^\epsilon\partial^\gamma_x \omega$
with multi-indices $\beta$ and $\gamma$ satisfying $|\beta|+|\gamma|\leq s$, $|\beta|>0$,
and $|\gamma|>0$. Thus, the inequality \eqref{nay} with $\sigma=s-1>d/2$ implies that
$\|\mathbf{h}_\alpha(\tau)\|\leq C(\mathcal{N})$. Hence, we have
 \begin{align}\label{nbgd}
 | I_2(\tau)|\leq  C(\mathcal{N})+\|\partial^\alpha  \omega(\tau)\|^2.
\end{align}
Noting that $([\cu, g^\epsilon]\,\mathbf{a}\,)_{i,j} =
a_i\partial_{x_j} g^\epsilon - a_j\partial_{x_i}g^\epsilon$ for $\mathbf{a}=(a_1,\cdots, a_d)$,
the inequality \eqref{nay}, and the estimate \eqref{nbfe},
we can control the term $I_3(\tau)$ as follows
 \begin{align}\label{nbge}
    |I_3(\tau)|& \leq \|\partial^\alpha \{ [\cu, g^\epsilon]\nabla q^\epsilon\}\|\; \|
    \partial^\alpha  \omega\|\nonumber\\
     & \leq K \| [\cu, g^\epsilon]\nabla q^\epsilon\|_{s-1}\, \| \partial^\alpha  \omega\|\nonumber\\
    & \leq K \|\nabla g^\epsilon(\tau)\|_{s-1} \|\nabla q^\epsilon(\tau)\|_{s-1}\, \|
    \partial^\alpha  \omega\|\nonumber\\
    &\leq  C(\mathcal{N})+\|\partial^\alpha  \omega(\tau)\|^2.
 \end{align}

Similarly, the term $I_4(\tau)$ can be bounded as follows.
 \begin{align}\label{nbgf}
    |I_4(\tau)|& \leq K\|\partial^\alpha \{[\u^\epsilon\cdot \nabla, \cu](r_0\u^\epsilon)\}\|
    \; \| \partial^\alpha  \omega\|\nonumber\\
& \leq K\|[\u^\epsilon\cdot \nabla, \cu](r_0\u^\epsilon)\|_{s-1}\, \| \partial^\alpha  \omega\|\nonumber\\
      & \leq K\|[\u^\epsilon_j, \cu]\partial_{x_j}(r_0\u^\epsilon)\|_{s-1}\,
      \| \partial^\alpha  \omega\|\nonumber\\
        &\leq  C(\mathcal{N})+\|\partial^\alpha  \omega(\tau)\|^2.
 \end{align}

To bound the term $I_5(\tau)$, we use the Moser-type inequality (see \cite{KM}) to deduce
\begin{align}\label{nbgg}
|I_5(\tau)| \leq & \epsilon^2 K \left\|\partial^\alpha \left\{ \cu
\left\{ r'_0(S^\epsilon)\u^\epsilon \frac{\Psi^\epsilon
 :\nabla\u^\epsilon}{b^\epsilon(S^\epsilon,
  \epsilon q^\epsilon)}\right\}
  \right\}\right\|\cdot \| \partial^\alpha  \omega\|\nonumber\\
=& \epsilon^2 K\left\|\partial^\alpha \left[\left(
\frac{r'_0(S^\epsilon)\Psi^\epsilon
 :\nabla\u^\epsilon}{b^\epsilon(S^\epsilon,
  \epsilon q^\epsilon)}\right) \cu\u^\epsilon\right]\right\|\cdot \| \partial^\alpha  \omega\|\nonumber\\
  &   + \epsilon^2 K\left\|\partial^\alpha \left[\nabla \left( \frac{r'_0(S^\epsilon)\Psi^\epsilon
 :\nabla\u^\epsilon}{b^\epsilon(S^\epsilon,
  \epsilon q^\epsilon)}\right) \times \u^\epsilon\right]\right\|\cdot \| \partial^\alpha  \omega\|\nonumber\\
 \leq & \epsilon^2 K  \|\cu\u^\epsilon\|_{L^\infty}\left\|D^{s-1}\Big(\frac{r'_0(S^\epsilon)\Psi^\epsilon
 :\nabla\u^\epsilon}{b^\epsilon(S^\epsilon,
  \epsilon q^\epsilon)}\Big)\right\|\cdot \| \partial^\alpha  \omega\|\nonumber\\
 &  +\epsilon^2 K\|D^{s-1}(\cu\u^\epsilon)\|\left\|\frac{r'_0(S^\epsilon)\Psi^\epsilon
 :\nabla\u^\epsilon}{b^\epsilon(S^\epsilon,
  \epsilon^2 q^\epsilon)}\right\|_{L^\infty}\cdot \| \partial^\alpha  \omega\|\nonumber\\
    &+ \epsilon^2 K  \|\u^\epsilon\|_{L^\infty}\left\|D^{s}\Big(\frac{r'_0(S^\epsilon)\Psi^\epsilon
 :\nabla\u^\epsilon}{b^\epsilon(S^\epsilon,
  \epsilon^2 q^\epsilon)}\Big)\right\|\cdot \| \partial^\alpha  \omega\|\nonumber\\
 &  +\epsilon^2 K\|D^{s-1}\u^\epsilon\|\left\|\nabla\left(\frac{r'_0(S^\epsilon)\Psi^\epsilon
 :\nabla\u^\epsilon}{b^\epsilon(S^\epsilon,
  \epsilon^2 q^\epsilon)}\right)\right\|_{L^\infty}\cdot \| \partial^\alpha  \omega\|\nonumber\\
\leq & C(\mathcal{N})+\epsilon^2C(\mathcal{N})\|\u^\epsilon\|_{s+1}^2+\|\partial^\alpha
\omega(\tau)\|^2 ,
\end{align}
where the condition $s> 2+d/2$ and the inequality \eqref{naz} have been used.
For the term $I_6(\tau)$, by virtue of \eqref{va},
$\cu \cu \mathbf{a} =\nabla \,\dv \,\mathbf{a} -\Delta \mathbf{a}$. Thus, we
 integrate by parts to see that
\begin{align*}
   I_6(\tau)   = &  \langle \partial^\alpha \{(1-\epsilon g^\epsilon)
(\cu{\H^\epsilon})\times {\H^\epsilon}\},\partial^\alpha\{\cu\cu( r_0\u^\epsilon)\}\rangle
\nonumber\\
= & \langle\partial^\alpha\{(1-\epsilon g^\epsilon)( \cu{\H^\epsilon})\times {\H^\epsilon}\},
\partial^\alpha \{\nabla \dv ( r_0\u^\epsilon) -\Delta  (r_0\u^\epsilon)\} \rangle ,\nonumber
\end{align*}
and use Cauchy-Schwarz's inequality and \eqref{naz} to conclude
\begin{align}\label{nbggg}
|I_6(\tau)|\leq C(\mathcal{N})+ \theta_{1}\|\u^\epsilon(\tau)\|^2_{s+1},
\end{align}
where $\theta_{1}>0$ is a sufficiently small constant independent of $\epsilon$.
Next, we deal with the term $I_7(\tau)$. By the vector
identities  and integration by parts, we see that there exists a
sufficiently small $\theta_2$, such that
\begin{align}\label{nbgggi}
I_7(\tau)\leq & -\inf \{r_0(S^\epsilon)\}\|\nabla\cu
\u^\epsilon(\tau)\|_{\sigma-1}+C(\mathcal{N})\nonumber\\
& +\theta_2\|\u^\epsilon(\tau)\|_{s+1}+\epsilon
C(\mathcal{N})\|\u^\epsilon(\tau)\|_{s+1}.
\end{align}

Finally, to estimate $\|\cu \H^\epsilon\|_{s-1}$, we apply the operator \emph{curl} to
\eqref{nam} and use the vector identity \eqref{naff} to obtain
\begin{align}\label{nbgh}
 & \partial_t (\cu \H^\epsilon)+\u^\epsilon\cdot\nabla(\cu\H^\epsilon)\nonumber\\
 & = -[\cu, \u^\epsilon]\cdot\nabla
\H^\epsilon+\cu((\H^\epsilon\cdot\nabla)\u^\epsilon-\H^\epsilon\dv\u^\epsilon ).
\end{align}
By the commutator inequality and Sobolev's inequalities, we find that
\begin{align*}
\|[\cu, \u^\epsilon]\cdot\nabla\H^\epsilon\| \leq C(\mathcal{N} )
\end{align*}
and
\begin{align*}
\|\cu ( (\H^\epsilon\cdot\nabla)\u^\epsilon - \H^\epsilon\dv\u^\epsilon )\|
\leq C(\mathcal{N})+\theta_3\|\u^\epsilon\|_{s+1}^2
\end{align*}
for sufficiently small constant $\theta_3>0$. Then, by
arguments similar to those used in Lemma \ref{NLa}, we derive that
\begin{align}\label{nbgggk}
\|\cu \H^\epsilon\|_{s-1}^2\leq  C_0 +
tC(\mathcal{N})+\theta_3\int_0^t \|\u^\epsilon(\tau)\|_{s+1}^2d\tau.
\end{align}

  Thus, the lemma follows from
  adding up the estimates \eqref{nbgb}--\eqref{nbgggk} for all $|\alpha|\leq s-1$
   and choosing constants $\theta_1$, $\theta_2$ and $\theta_3$ appropriately small.
   \end{proof}

Next we complete the proof of Theorem \ref{prop} by the following
estimate.

\begin{lem}\label{NLk}
There exist   constants $C_0>0$, $0<\xi_4<\mu$, and  a function
$C(\cdot)$ from $[0,\infty)$ to $[0,\infty)$, such that for all
$\epsilon \in (0,1]$ and $t\in [0,T]$,
\begin{align}\label{nbja}
\|(S^\epsilon,q^\epsilon,\u^\epsilon,\H^\epsilon)(t)\|^2_{s}+ \xi_4
\int_0^t\|\u^\epsilon\|^2d\tau\leq
C_0+(t+\epsilon)C(\mathcal{M}_\epsilon(T)).
\end{align}
\end{lem}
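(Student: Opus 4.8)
The plan is to assemble Lemmas~\ref{NLa}, \ref{NLb}, \ref{NLi}, \ref{NLee} and \ref{NLg} into a single inequality for $\|(S^\epsilon,q^\epsilon,\u^\epsilon,\H^\epsilon)(t)\|_s^2+\xi_4\int_0^t\|\u^\epsilon\|_{s+1}^2\,d\tau$, and then to absorb every auxiliary term of the form $\eta\int_0^t\|\u^\epsilon\|_{s+1}^2\,d\tau$ into the good dissipation terms appearing on the left-hand sides of the earlier lemmas. First I would bound $\|\U^\epsilon(t)\|_s=\|(q^\epsilon,\u^\epsilon)(t)\|_s$ by applying the Helmholtz-type inequality \eqref{nbha} with $\sigma=s$, which reduces matters to controlling $\|\{L_{E^\epsilon}(\partial_x)\}^s\U^\epsilon\|_0$, $\|\cu(r_0\u^\epsilon)\|_{s-1}$ and the lower-order remainder $\|\U^\epsilon\|_{s-1}$. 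The first two are exactly what Lemma~\ref{NLee} and Lemma~\ref{NLg} provide, while $\|\U^\epsilon\|_{s-1}$ is removed by induction on $\sigma$ via \eqref{nbha}, terminating at the $L^2$ base case of Lemma~\ref{NLb}. Since the prefactor $\tilde C=C_1+tC(\mathcal{M})+\epsilon C(\mathcal{M})$ in \eqref{nbha} stays near $C_1$ for $t,\epsilon$ small, squaring yields $\|\U^\epsilon(t)\|_s^2\le C_0+(t+\epsilon)C(\mathcal{M})+\eta\int_0^t\|\u^\epsilon\|_{s+1}^2\,d\tau$, with $\eta$ inherited from $\eta_1,\eta_2$ and thus at our disposal.

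Next I would treat the magnetic field. Because $\dv\H^\epsilon=0$, the div-curl elliptic estimate $\|\H^\epsilon\|_s\le C(\|\cu\H^\epsilon\|_{s-1}+\|\H^\epsilon\|_0)$ applies, so that $\|\H^\epsilon(t)\|_s^2$ is controlled by $\|\cu\H^\epsilon\|_{s-1}^2$ from Lemma~\ref{NLg} and by $\|\H^\epsilon\|_0^2$ from Lemma~\ref{NLb}. Combined with the entropy bound \eqref{nbe} of Lemma~\ref{NLa}, this delivers the full $H^s$ bound on $(S^\epsilon,q^\epsilon,\u^\epsilon,\H^\epsilon)$ up to the same small $\int_0^t\|\u^\epsilon\|_{s+1}^2$ term.

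The crucial and hardest step is to close the dissipation integral $\int_0^t\|\u^\epsilon\|_{s+1}^2\,d\tau$. Here I would use the Helmholtz--Weyl decomposition $\|\u^\epsilon\|_{s+1}^2\le C(\|\dv\u^\epsilon\|_s^2+\|\cu\u^\epsilon\|_s^2+\|\u^\epsilon\|_0^2)$. The rotational part splits as $\|\cu\u^\epsilon\|_s^2=\|\cu\u^\epsilon\|_0^2+\|\nabla\cu\u^\epsilon\|_{s-1}^2$, and after integration in time it is dominated by the dissipation $\xi_3\int_0^t\|\nabla\cu\u^\epsilon\|_{s-1}^2$ of Lemma~\ref{NLg} together with $\int_0^t\|\nabla\u^\epsilon\|^2$ of Lemma~\ref{NLb}. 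The compressible part $\|\dv\u^\epsilon\|_s^2$ must instead be reconstructed from $\W_\sigma^\epsilon=\{L_{E^\epsilon}(\partial_x)\}^\sigma(0,\u^\epsilon)^\top$: since $L_{E^\epsilon}$ is built from $\{a^\epsilon\}^{-1}\dv$ and $\{r^\epsilon\}^{-1}\nabla$, the quantities $\nabla\W_\sigma^\epsilon$ for $1\le\sigma\le s$ encode the top-order divergence derivatives of $\u^\epsilon$, so that summing the dissipations $\tfrac{\xi_2}{2}\int_0^t\|\nabla\W_\sigma^\epsilon\|^2$ from Lemma~\ref{NLee} over $\sigma$ recovers $\int_0^t\|\dv\u^\epsilon\|_s^2$ modulo commutators with $E^\epsilon$ (costing only lower-order terms bounded by $C(\mathcal{N})$) and modulo lower-order $\W$-norms absorbed inductively.

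Finally I would add the four estimates, the entropy bound, the $\U^\epsilon$ bound, the $\H^\epsilon$ bound and the dissipation bound, and fix $\xi_4$ together with the small constants $\eta_1,\eta_2,\eta,\theta_1,\theta_2,\theta_3$ small enough that every term $\eta\int_0^t\|\u^\epsilon\|_{s+1}^2\,d\tau$ is dominated by $\xi_4\int_0^t\|\u^\epsilon\|_{s+1}^2\,d\tau$ on the left, which gives \eqref{nbja}. The main obstacle is precisely the bookkeeping that guarantees the compressible and rotational dissipations jointly reconstruct the full top-order norm $\|\u^\epsilon\|_{s+1}^2$ while all remainders either stay of lower order or carry a controllably small constant; tracking the variable coefficients $a^\epsilon,r^\epsilon$, which depend on both scales $S^\epsilon$ and $\epsilon q^\epsilon$, through the commutators generated by $\{L_{E^\epsilon}(\partial_x)\}^\sigma$ and $\cu$ is the delicate part, and it is here that the entropy estimate of Lemma~\ref{NLa} and the uniform positivity of $a^\epsilon,r^\epsilon$ are repeatedly used.
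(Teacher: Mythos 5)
Your proposal is correct and follows essentially the same route as the paper: the paper likewise assembles Lemmas \ref{NLa}, \ref{NLb}, \ref{NLee} and \ref{NLg} through the Helmholtz-type inequality \eqref{nbha} of Lemma \ref{NLi} (in its gradient form \eqref{nbha2}--\eqref{nbha3}), recovers the dissipation $\int_0^t\|\u^\epsilon\|_{\sigma+1}^2\,d\tau$ from the terms $\|\nabla\W_\sigma^\epsilon\|^2$ and $\|\nabla\cu\u^\epsilon\|_{\sigma-1}^2$ with a prefactor of the form $C_1+tC(\mathcal{M}_\epsilon(T))+\epsilon C(\mathcal{M}_\epsilon(T))$, absorbs the small $\eta_1,\eta_2$-terms, and closes by induction on $\sigma\in\{0,\dots,s\}$ with Lemma \ref{NLb} as the base case. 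Your hand reconstruction of $\|\dv\u^\epsilon\|_s$ from the $\W_\sigma^\epsilon$ via the flat Helmholtz--Weyl decomposition is precisely what the paper's inequality \eqref{nbha2} encapsulates, so the two arguments coincide in substance.
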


\begin{proof}
First, from \eqref{nbha} we get
\begin{align}\label{nbha2}
\|\u^\epsilon\|_{\sigma+1}^2 \leq \tilde{
C}\Big\{\|\nabla(\{L_{E^\epsilon}(\partial_x)\}^\sigma
{\W}^\epsilon\|_{0}^2
+\|\nabla\cu(r_0\u^\epsilon)\|_{\sigma-1}^2+\|\u^\epsilon\|_{\sigma}^2\Big\},
\end{align}
where $\tilde{C}:=C_1+tC(\mathcal{M}_\epsilon(T))+\epsilon
C(\mathcal{M}_\epsilon(T))$. Moreover, using Lemma \ref{NLa}, we obtain
\begin{align}\label{nbha3}
\|\u^\epsilon\|_{\sigma+1}^2 \leq \tilde{
C}\Big\{\|\nabla\{L_{E^\epsilon}(\partial_x)\}^\sigma
{\W}^\epsilon\|_{0}^2
+\|\nabla(\cu\u^\epsilon)\|_{\sigma-1}^2+\|\u^\epsilon\|_{\sigma}^2\Big\}.
\end{align}
In view of \eqref{nbha3}, there exists a constant $\kappa_2$ such that
\begin{align}
\frac{\xi_2}{2}\|\nabla\W_\sigma^\epsilon\|_0^2+\xi_3\|\nabla\cu\u^\epsilon\|_{\sigma-1}^2 \geq &
\kappa_2\|\u^\epsilon\|_{\sigma+1}^2-\tilde{C}_1\Big\{\|\nabla\{L_{E^\epsilon}(\partial_x)\}^\sigma
{\W}^\epsilon\|_{0}^2\nonumber\\
&+\|\nabla(\cu\u^\epsilon)\|_{\sigma-1}^2\Big\}-\tilde{C}\|\u^\epsilon\|_{\sigma}^2, \label{SM1}
\end{align}
where $\tilde{C}_1=tC(\mathcal{M}_\epsilon(T))+\epsilon
C(\mathcal{M}_\epsilon(T))$. Now, we combine the estimates
\eqref{nbeea} and \eqref{nbga} with \eqref{SM1}, and use the fact that
$\dv\H=0$ to conclude that there exists a positive constant $\kappa_3$,
such that
\begin{align*}
&\|(L_E(\partial_x))^\sigma\U^\epsilon\|_0^2+\|\cu(r_0\u^\epsilon)\|_{\sigma-1}^2+\kappa_3\int_0^t
\|\u^\epsilon\|_{\sigma+1}^2dx\\
\leq& C_0+tC(\mathcal{M}_\epsilon(T))+
\tilde{C}\int_0^t\|\u^\epsilon\|_{\sigma}^2d\tau\\
& +\tilde{C}_1\int_0^t\Big\{\|\nabla\{L_{E^\epsilon}(\partial_x)\}^\sigma
{\W}^\epsilon\|_{0}^2+\|\nabla(\cu\u^\epsilon)\|_{\sigma-1}^2\Big\}d\tau\\
\leq& C_0+(t+\epsilon)C(\mathcal{M}_\epsilon(T))+\tilde{C}\int_0^t\|\u^\epsilon\|_{\sigma}^2d\tau
\end{align*}
for sufficiently small $\eta_1$ and $\eta_2$. Thus by induction, we conclude that
\begin{align*}
 \|\{L_{E^\epsilon}(\partial_x)\}^\sigma\U^\epsilon\|_0^2
 & +\|\cu(r_0\u^\epsilon)\|_{\sigma-1}^2\nonumber\\
&  +\kappa_3\int_0^t \|\u^\epsilon\|_{\sigma+1}^2d\tau
 \leq C_0+(t+\epsilon)C(\mathcal{M}_\epsilon(T)).
\end{align*}
Using \eqref{nbha} again, we obtain the estimate \eqref{nbja} by induction on
$\sigma \in\{0,\dots, s\}$.
\end{proof}

\section{Incompressible limit}

In this section, we shall prove the convergence part of Theorem
\ref{mth} by modifying the method developed by M\'{e}tivier and
Schochet \cite{MS01}, see also some extensions \cite{A05,A06, LST}.

\begin{proof}[Proof of the convergence part of Theorem \ref{mth}]
 The uniform bound \eqref{nav} implies that, after extracting a subsequence,
 one gets the following limits:
   \begin{align}
      & (q^\epsilon,\u^\epsilon,\H^\epsilon )\rightharpoonup (q,\v,\bar\H )\quad \text{weakly-}\ast
     \  \text{in} \quad L^\infty (0,T; H^s(\mathbb{R}^d)).
\label{caa}
\end{align}

 The equations \eqref{nam} and \eqref{nan} imply that
 $\partial_tS^\epsilon$ and $\partial_t \H^\epsilon \in C([0,T],H^{s-1}(\mathbb{R}^d))$.
 Thus, after further extracting a subsequence, we
  obtain that, for all $s'<s$,
\begin{align}
 &     S^\epsilon \rightarrow \bar S \quad \text{strongly in}  \quad
 C([0,T],H^{s'}_{\mathrm{loc}}(\mathbb{R}^d)),\label{cab}\\
&\H^\epsilon  \rightarrow \bar \H \quad \text{strongly in} \quad
C([0,T],H^{s'}_{\mathrm{loc}}(\mathbb{R}^d)),\label{cad}
\end{align}
where the limit $\bar \H \in C([0,T],H^{s'}_{\mathrm{loc}}(\mathbb{R}^d))\cap
L^\infty (0,T;H^{s}_{\mathrm{loc}}(\mathbb{R}^d))$. Similarly, by
\eqref{nbfg} and the uniform bound \eqref{nav}, we have
\begin{align}
 & \cu (r_0(S^\epsilon) \u^\epsilon) \rightarrow \cu (r_0(\bar S) \v) \quad
 \text{strongly in}  \quad C([0,T],H^{s'-1}_{\mathrm{loc}}(\mathbb{R}^d))\label{cac}
  \end{align}
for all $ s'<s$, where $ r_0(\bar S)=\lim_{\epsilon \rightarrow
0}r_0(S^\epsilon):= \lim_{\epsilon \rightarrow
0}r^\epsilon(S^\epsilon,0)$.

In order to obtain the limit system, we need to prove that the
convergence in \eqref{caa} holds in the strong topology of
$L^2(0,T;H^{s'}_{\mathrm{loc}}(\mathbb{R}^d))$ for all $s'<s$. To this end,
we first show that $q=0$ and $\dv \v=0$. In fact, from \eqref{nbeb} we get
\begin{align}\label{nbeb1}
 \epsilon E^\epsilon(S^\epsilon,\epsilon q^\epsilon)\partial_t\U^\epsilon
 +L(\partial_x)\U^\epsilon=-\epsilon E^\epsilon(S^\epsilon,\epsilon q^\epsilon)
 \u^\epsilon\cdot \nabla\U^\epsilon+\epsilon (\mathbf{J}^\epsilon+\mathbf{V}^\epsilon).
\end{align}
Since
\begin{align*}
 E^\epsilon(S^\epsilon,\epsilon
 q^\epsilon)-E^\epsilon(S^\epsilon,0)=O(\epsilon),
\end{align*}
we have
\begin{align}\label{pass}
\epsilon E^\epsilon(S^\epsilon,0)\partial_t\U^\epsilon
 +L(\partial_x)\U^\epsilon=\epsilon \mathbf{h}^\epsilon ,
\end{align}
where $\mathbf{h}^\epsilon$ is uniformly bounded in $C([0,T],H^{s-2}(\mathbb{R}^d))$
in view of \eqref{nav}. Passing to the weak
limit to  \eqref{pass}, we obtain $\nabla q=0$ and $\dv \v=0$.
Since $q\in L^\infty(0,T;H^s(\mathbb{R}^d))$, we infer that $q=0$.
Noticing that the strong compactness for the incompressible
components by \eqref{cac}, it is sufficient to prove the following
proposition on the acoustic components.

\begin{prop}\label{LC}
Suppose that the assumptions in Theorem \ref{mth} hold, then
$q^\epsilon$ converges strongly to $0$ in
$L^2(0,T;H^{s'}_{\mathrm{loc}}(\mathbb{R}^d))$ for all $s'<s$, and $\dv
\u^\epsilon$ converges strongly to $0$ in $L^2(0,T;
H^{s'-1}_{\mathrm{loc}}(\mathbb{R}^d))$ for all $s'<s$.
\end{prop}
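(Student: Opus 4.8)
The plan is to isolate the genuinely acoustic quantities and invoke the local energy decay theorem for fast waves established by M\'{e}tivier and Schochet in \cite{MS01}. Since the incompressible component is already under control --- the strong convergence \eqref{cac} of $\cu(r_0\u^\epsilon)$ together with the uniform bound \eqref{nav} pins down the divergence-free part of the velocity --- it suffices to treat $q^\epsilon$ and $\dv\u^\epsilon$. First I would extract the acoustic wave structure from \eqref{pass}. Writing the two components of the singular system as
\[
\epsilon\, a^\epsilon(S^\epsilon,0)\,\partial_t q^\epsilon+\dv\u^\epsilon=\epsilon\, h_1^\epsilon,\qquad
\epsilon\, r^\epsilon(S^\epsilon,0)\,\partial_t\u^\epsilon+\nabla q^\epsilon=\epsilon\,\mathbf{h}_2^\epsilon,
\]
where $(h_1^\epsilon,\mathbf{h}_2^\epsilon)$ is uniformly bounded in $C([0,T];H^{s-2}(\R^d))$ by \eqref{nav} (the Lorentz force $\mathbf{J}^\epsilon$ and the viscous term $\mathbf{V}^\epsilon$ entering only through these bounded remainders), and then eliminating $\dv\u^\epsilon$ yields a second-order wave equation for $q^\epsilon$,
\[
\epsilon^2\,\partial_t\big(a^\epsilon(S^\epsilon,0)\,\partial_t q^\epsilon\big)
-\dv\Big(\tfrac{1}{r^\epsilon(S^\epsilon,0)}\nabla q^\epsilon\Big)=\epsilon\,\Phi^\epsilon,
\]
with a forcing $\Phi^\epsilon$ that remains bounded in a suitable negative-order local norm, hence negligible in the limit.

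The crucial observation is that the coefficients $a^\epsilon(S^\epsilon,0)$ and $1/r^\epsilon(S^\epsilon,0)$ depend only on $S^\epsilon$, which converges strongly in $C([0,T];H^{s'}_{\mathrm{loc}}(\R^d))$ by \eqref{cab}, while the spatial-decay hypothesis \eqref{naw} on $S^\epsilon_0$ is transported by \eqref{naq} into a uniform decay of $S^\epsilon(t)-\underline S$ at infinity. This decay is precisely what forces the limiting acoustic operator $a(\bar S,0)^{-1}\dv\big(r(\bar S,0)^{-1}\nabla\cdot\big)$ to be a short-range perturbation of a constant-coefficient operator, so that (via a Rellich-type unique continuation argument) it carries no embedded eigenvalues and has purely absolutely continuous spectrum. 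With these hypotheses verified, the local energy decay theorem of \cite{MS01} applies and yields $q^\epsilon\to0$ strongly in $L^2(0,T;L^2_{\mathrm{loc}}(\R^d))$. I would then read off $\dv\u^\epsilon\to0$ in $L^2(0,T;H^{-1}_{\mathrm{loc}}(\R^d))$ directly from the first acoustic equation, since $\dv\u^\epsilon=-\epsilon\,a^\epsilon(S^\epsilon,0)\,\partial_t q^\epsilon+\epsilon\, h_1^\epsilon$ and the right-hand side is $O(\epsilon)$ in $C([0,T];H^{s-2}(\R^d))$.

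The final step is to upgrade these low-regularity convergences using the uniform bound \eqref{nav}. Interpolating the strong $L^2_{\mathrm{loc}}$ convergence of $q^\epsilon$ against the $H^s$ bound gives strong convergence in $H^{s'}_{\mathrm{loc}}(\R^d)$ for every $s'<s$, and the analogous interpolation promotes $\dv\u^\epsilon\to0$ to $L^2(0,T;H^{s'-1}_{\mathrm{loc}}(\R^d))$, as claimed.

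I expect the main obstacle to be not the decay theorem itself --- which is quoted from \cite{MS01} as a black box --- but the verification that its hypotheses survive the MHD coupling. Concretely, one must check that the source terms generated by the Lorentz force and the fluid viscosity, which are absent in the pure Euler problem of \cite{MS01}, remain uniformly bounded after multiplication by $\epsilon$ and do not corrupt the wave structure; here the $L^2(0,T;H^{s+1})$ control of $\u^\epsilon$ built into $\mathcal{M}_\epsilon(T)$ is exactly what keeps $\dv\Psi^\epsilon$ harmless. The second delicate point is the uniform (in $\epsilon$) propagation of the decay condition \eqref{naw} from the initial entropy to $S^\epsilon(t)$, which underlies the spectral hypothesis required by the decay theorem.
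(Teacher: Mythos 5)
Your treatment of $q^\epsilon$ is essentially the paper's argument: form a wave equation $\epsilon^2\partial_t(a^\epsilon\partial_t q^\epsilon)-\dv\bigl(\tfrac{1}{r^\epsilon}\nabla q^\epsilon\bigr)=F^\epsilon$ from \eqref{nak}--\eqref{nal} (the paper keeps the coefficients $a^\epsilon(S^\epsilon,\epsilon q^\epsilon)$, $r^\epsilon(S^\epsilon,\epsilon q^\epsilon)$ rather than freezing them at $\epsilon q^\epsilon=0$, but the difference is $O(\epsilon)$ and harmless), check that the MHD source terms carry an $\epsilon$ prefactor and are controlled by \eqref{nav} and the $L^2(0,T;H^{s+1})$ bound on $\u^\epsilon$, invoke Lemma \ref{LD}, and interpolate with the uniform $H^s$ bound. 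One caution there: Lemma \ref{LD} requires the forcing to converge to $0$ \emph{strongly in} $L^2(0,T;L^2(\R^d))$; your phrase ``bounded in a suitable negative-order local norm, hence negligible'' is weaker than what must be verified, although the actual bounds (three derivatives of $\u^\epsilon$ cost $L^2_tH^{s-2}\subset L^2_tL^2$) do deliver the required strong $L^2_tL^2$ convergence.

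The genuine gap is in your treatment of $\dv\u^\epsilon$. You claim that $\dv\u^\epsilon=-\epsilon\,a^\epsilon(S^\epsilon,0)\,\partial_t q^\epsilon+\epsilon h_1^\epsilon$ has right-hand side $O(\epsilon)$ in $C([0,T];H^{s-2})$. This is false for the general (ill-prepared) data of Theorem \ref{mth}: the uniform estimate \eqref{nbba} controls $\epsilon\partial_t q^\epsilon$, not $\partial_t q^\epsilon$, i.e.\ $\partial_t q^\epsilon=O(1/\epsilon)$, so $\epsilon a^\epsilon\partial_t q^\epsilon$ is only $O(1)$ --- it is precisely the fast acoustic oscillation. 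Indeed, if your claim were true, evaluating at $t=0$ would give $\dv\u^\epsilon_0=O(\epsilon)$, which is the well-prepared case the theorem does not assume. Nor can you bypass this by passing to distributional limits: $\dv\u^\epsilon\rightharpoonup 0$ weakly follows easily, but weak convergence plus the $H^{s-1}$ bound does not interpolate to strong local convergence (there is no uniform time-compactness, exactly because $\partial_t\dv\u^\epsilon=O(1/\epsilon)$; in the torus the analogous strong convergence is false). The correct route --- what the paper's ``similarly'' refers to --- is to run the dispersive argument a second time: either derive a wave equation of the same type for $\dv\u^\epsilon$ (apply $\epsilon^2\partial_t$ to the divergence of \eqref{nal}$/r^\epsilon$ and eliminate $q^\epsilon$ via \eqref{nak}), or show $\epsilon\partial_t q^\epsilon\to 0$ in $L^2(0,T;L^2_{\mathrm{loc}})$ by applying Lemma \ref{LD} to the time-differentiated wave equation, and only then conclude from $\dv\u^\epsilon=-\epsilon a^\epsilon\partial_t q^\epsilon+O(\epsilon)$ and interpolation. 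A smaller inaccuracy: the decay hypothesis in Lemma \ref{LD} concerns the \emph{limit} coefficients, i.e.\ functions of $\bar S$ solving the transport equation \eqref{naq}; it is the limit entropy, not $S^\epsilon(t)$ uniformly in $\epsilon$, for which the decay \eqref{naw} must be propagated.
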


The proof of Proposition \ref{LC} is built on the the following
dispersive estimates on the wave equations obtained by M\'{e}tivier
and Schochet \cite{MS01} and reformulated in \cite{A06}.

\begin{lem}[\cite{MS01,A06}]\label{LD}
   Let $T>0$ and $w^\epsilon$ be a bounded sequence in $C([0,T],H^2(\mathbb{R}^d))$, such that
   \begin{align*}
    \epsilon^2\partial_t(a^\epsilon \partial_t w^\epsilon)-\nabla\cdot (b^\epsilon \nabla w^\epsilon)=c^\epsilon,
   \end{align*}
where $c^\epsilon$ converges to $0$ strongly in $L^2(0,T;
L^2(\mathbb{R}^d))$. Assume further that, for some $s> d/2+1$, the
coefficients $(a^\epsilon,b^\epsilon)$ are uniformly bounded in
$C([0,T];H^s(\mathbb{R}^d))$ and converges in
$C([0,T];H^s_{\mathrm{loc}}(\mathbb{R}^d))$ to a limit $(a,b)$
satisfying the decay estimate
\begin{gather*}
   |a(x,t)-\underline a|\leq C_0 |x|^{-1-\delta}, \quad |\nabla_x a(x,t)|\leq C_0 |x|^{-2-\delta}, \\
 |b(x,t)-\underline b|\leq C_0 |x|^{-1-\delta}, \quad |\nabla_x b(x,t)|\leq C_0 |x|^{-2-\delta},
\end{gather*}
for some given positive constants $\underline a$, $\underline b$, $
C_0$ and $\delta$. Then the sequence $w^\epsilon$ converges to $0$
in  $L^2(0,T; L^2_{\mathrm{loc}}(\mathbb{R}^d))$.
\end{lem}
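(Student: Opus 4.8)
The plan is to read this as a \emph{local energy decay} statement for a wave operator whose propagation speed blows up like $1/\epsilon$: because $w^\epsilon$ oscillates on the fast time scale $t/\epsilon$, its acoustic energy is radiated to spatial infinity at speed $O(1/\epsilon)$, so the energy trapped in any fixed compact set and averaged over $[0,T]$ must vanish as $\epsilon\to 0$; the rigorous engine will be a RAGE-type theorem, and the decisive hypothesis is the short-range decay of $(a,b)$. To set this up I would first put the equation in self-adjoint form. Let $\mathcal{L}^\epsilon:=-(a^\epsilon)^{-1}\nabla\cdot(b^\epsilon\nabla\,\cdot\,)$, nonnegative and self-adjoint on the weighted space $L^2(\R^d,a^\epsilon\,dx)$. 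Dividing the equation by $a^\epsilon$ gives
\[
\epsilon^2\partial_t^2 w^\epsilon+\mathcal{L}^\epsilon w^\epsilon=r^\epsilon,\qquad r^\epsilon:=(a^\epsilon)^{-1}c^\epsilon-\epsilon^2(a^\epsilon)^{-1}(\partial_t a^\epsilon)\,\partial_t w^\epsilon,
\]
where the first part of $r^\epsilon$ tends to $0$ in $L^2(0,T;L^2)$ by hypothesis and the second is $O(\epsilon)$ since $\epsilon\partial_t w^\epsilon$ is controlled by the energy bound. By Duhamel's formula the contribution of $r^\epsilon$ to $\int_0^T\|\varphi w^\epsilon\|^2\,dt$ (with a cutoff $\varphi\in C_c^\infty$) vanishes as $\epsilon\to0$, so it suffices to treat the homogeneous evolution generated by $\cos(\epsilon^{-1}t\sqrt{\mathcal{L}^\epsilon})$ and $\epsilon\,(\mathcal{L}^\epsilon)^{-1/2}\sin(\epsilon^{-1}t\sqrt{\mathcal{L}^\epsilon})$ acting on the uniformly $H^2$-bounded Cauchy data.

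Second, I would pass to the limit operator $\mathcal{L}:=-a^{-1}\nabla\cdot(b\nabla\,\cdot\,)$ on $L^2(\R^d,a\,dx)$ and establish its spectral structure. Since $(a,b)\to(\underline a,\underline b)$ at rate $|x|^{-1-\delta}$ with $\delta>0$, the operator $\mathcal{L}$ is a short-range perturbation of the constant-coefficient operator $(\underline b/\underline a)(-\Delta)$; its essential spectrum is $[0,\infty)$, and Kato's theorem on the absence of positive eigenvalues—equivalently a Mourre/virial estimate supplied precisely by the decay bounds on $\nabla a,\nabla b$—together with the non-existence of an $L^2$ zero-energy state in dimension $d=2,3$ shows that $\mathcal{L}$ has purely absolutely continuous spectrum and no eigenvalues. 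This absence of point spectrum is the structural fact the whole argument rests on.

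Third, I would invoke RAGE. For $\mathcal{L}$ self-adjoint without eigenvalues and any $\mathcal{L}$-compact operator $\mathcal{K}$—for instance $\mathcal{K}=\varphi(\mathcal{L}+1)^{-1}$ with $\varphi\in C_c^\infty$, compact by Rellich's theorem—one has $\frac1\Theta\int_0^\Theta\|\mathcal{K}e^{i\tau\sqrt{\mathcal{L}}}\phi\|^2\,d\tau\to0$ as $\Theta\to\infty$. Writing the local energy in the spectral representation of $\mathcal{L}$ and rescaling $\tau=t/\epsilon$ converts $\int_0^T\|\varphi w^\epsilon\|^2\,dt$ into $T$ times a Cesàro average over $\tau\in[0,T/\epsilon]$; the diagonal terms vanish by RAGE and the oscillatory cross terms $e^{\pm 2i\tau\sqrt{\mathcal{L}}}$ vanish likewise. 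Converting back from the $L^2(a\,dx)$ norm to the stated $L^2_{\mathrm{loc}}$ norm then yields $w^\epsilon\to0$ in $L^2(0,T;L^2_{\mathrm{loc}}(\R^d))$.

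The main obstacle is the spectral step: proving that $\mathcal{L}$ has no embedded eigenvalue in $(0,\infty)$ and no zero eigenvalue, which is exactly what the hypotheses $|a-\underline a|,|b-\underline b|\lesssim|x|^{-1-\delta}$ and $|\nabla a|,|\nabla b|\lesssim|x|^{-2-\delta}$ are tailored to deliver through a limiting absorption principle. A second, more technical difficulty is that $\mathcal{L}^\epsilon$ genuinely depends on $\epsilon$ through its coefficients, so RAGE cannot be applied to a single fixed operator; I would resolve this by combining the local (strong resolvent) convergence $\mathcal{L}^\epsilon\to\mathcal{L}$ with a diagonal extraction, bounding every remainder in terms of the uniform $H^2$ norm of $w^\epsilon$ and the vanishing of $r^\epsilon$.
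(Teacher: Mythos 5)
You should first know that the paper does not prove this lemma: it is quoted, as a black box, from M\'etivier and Schochet \cite{MS01} in the reformulation given by Alazard \cite{A06}, so the only meaningful comparison is with the proof in those references. Measured against that proof, your strategy is essentially theirs: rewrite the equation as a wave equation for a nonnegative self-adjoint operator on a weighted $L^2$ space, show that the limiting operator has no point spectrum (no zero eigenvalue by integration by parts, no embedded positive eigenvalues thanks to the short-range decay of the coefficients --- which is exactly what those hypotheses are for), and conclude by a RAGE-type time-averaging argument, the point being that the rescaled time interval $[0,T/\epsilon]$ has length tending to infinity.

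There are, however, two genuine gaps. (i) The limit coefficients are $a(x,t)$, $b(x,t)$: they depend on time, and so do $a^\epsilon$, $b^\epsilon$. Consequently $\mathcal{L}^\epsilon$ and $\mathcal{L}$ are time-dependent \emph{families} of operators, and your reduction to ``the homogeneous evolution generated by $\cos(\epsilon^{-1}t\sqrt{\mathcal{L}^\epsilon})$'' is not available: there is no functional-calculus solution formula for a wave equation whose generator varies in time. Any correct proof must add a step here, for instance freezing the coefficients on time intervals of small length $\eta$ and checking that the freezing errors are $O(\eta)$ uniformly in $\epsilon$ (which uses that $\epsilon^2\partial_t^2 w^\epsilon$ is bounded through the equation); nothing in your outline plays this role. (ii) The RAGE theorem you invoke is a statement about a single fixed vector: $\frac{1}{\Theta}\int_0^\Theta\|\mathcal{K}e^{i\tau\sqrt{\mathcal{L}}}\phi\|^2\,d\tau\to 0$ for each $\phi$. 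Here the Cauchy data depend on $\epsilon$; they are bounded in $H^2(\mathbb{R}^d)$, but bounded sets of $H^2(\mathbb{R}^d)$ are not precompact in $L^2(\mathbb{R}^d)$, so neither ``RAGE plus density'' nor ``strong resolvent convergence plus diagonal extraction'' yields the required uniformity over the family of data. Obtaining a time-averaging statement that is uniform in $\epsilon$ --- carried out in \cite{MS01} by working directly with the spectral measures of the $\epsilon$-dependent data and using that the limit operator's spectral measure has no atoms --- is the technical heart of the cited proof, and it is missing from your plan. Two smaller points: the hypotheses contain no bound on $\epsilon\partial_t w^\epsilon$, so your claim that $\epsilon^2(a^\epsilon)^{-1}(\partial_t a^\epsilon)\partial_t w^\epsilon$ is $O(\epsilon)$ needs a separate justification; and you neither need nor have proven absolutely continuous spectrum --- absence of point spectrum is all the argument uses.
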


\begin{proof}[Proof of Proposition \ref{LC}]
We first show that $q^\epsilon$ converges strongly to $0$ in
$L^2(0,T; \linebreak H^{s'}_{\mathrm{loc}}(\mathbb{R}^d))$ for all
$s'<s$. An application of the operator $\epsilon^2 \partial_t$ to \eqref{nak}
gives
\begin{align}\label{cae}
\epsilon^2\partial_t (a^\epsilon(S^\epsilon,\epsilon q^\epsilon)
\partial_t q^\epsilon) +\epsilon \partial_t\dv \u^\epsilon
=-\epsilon^2 \partial_t\{a^\epsilon(S^\epsilon,\epsilon
q^\epsilon)(\u^\epsilon\cdot\nabla )q^\epsilon\}.
\end{align}
Dividing \eqref{nal} by $r^\epsilon(S^\epsilon,\epsilon q^\epsilon)$
and then taking the operator \emph{div} to the resulting equation, one has
\begin{align}\label{caf}
& \partial_t \dv \u^\epsilon +\frac{1}{\epsilon}\dv
\Big(\frac{1}{r^\epsilon(S^\epsilon,\epsilon q^\epsilon)}\nabla q^\epsilon\Big)\nonumber\\
& \qquad \qquad =-\dv ((\u^\epsilon\cdot \nabla)\u^\epsilon)+\dv
\Big(\frac{1}{r^\epsilon(S^\epsilon,\epsilon q^\epsilon)}(\cu
\H^\epsilon)\times \H^\epsilon\Big)\nonumber\\
&\qquad\qquad\quad+\dv\Big(\frac{1}{r^\epsilon(S^\epsilon,\epsilon
q^\epsilon)}\dv\Psi(\u^\epsilon)\Big).
\end{align}
Subtracting \eqref{caf} from \eqref{cae}, we obtain
\begin{align}\label{cag}
\epsilon^2\partial_t (a^\epsilon(S^\epsilon,\epsilon q^\epsilon)
\partial_t q^\epsilon) -\dv
\Big(\frac{1}{r^\epsilon(S^\epsilon,\epsilon q^\epsilon)}\nabla
q^\epsilon\Big) =  F^\epsilon(S^\epsilon, q^\epsilon, \u^\epsilon,
\H^\epsilon),
\end{align}
where
\begin{align*}
 F^\epsilon(S^\epsilon, q^\epsilon, \u^\epsilon, \H^\epsilon)
  =\, & \epsilon  \dv \Big(\frac{1}{r^\epsilon(S^\epsilon,\epsilon q^\epsilon)}
  (\cu \H^\epsilon)\times \H^\epsilon\Big)\\
& +\epsilon\dv\Big(\frac{1}{r^\epsilon(S^\epsilon,\epsilon
q^\epsilon)}\dv\Psi(\u^\epsilon)\Big) -\epsilon \dv
((\u^\epsilon\cdot \nabla)\u^\epsilon)\nonumber\\
 & - \epsilon^2
\partial_t\{a^\epsilon(S^\epsilon,\epsilon
q^\epsilon)(\u^\epsilon\cdot\nabla )q^\epsilon\}.
\end{align*}

In view of the uniform boundedness of $(S^\epsilon,q^\epsilon,\u^\epsilon,\H^\epsilon)$,
the smoothness and positivity assumptions on $a^\epsilon$ and $r^\epsilon$,
 and the convergence of $S^\epsilon$, we find that
\begin{align*}
F^\epsilon(S^\epsilon, q^\epsilon, \u^\epsilon, \H^\epsilon)
\rightarrow 0 \quad \text{strongly in} \quad L^2(0,T;
L^2(\mathbb{R}^d)),
\end{align*}
and the coefficients in \eqref{cag} satisfy the requirements in
Lemma \ref{LD}. Therefore, by virtue of Lemma \ref{LD},
\begin{align*}
q^\epsilon \rightarrow 0 \quad \text{strongly in} \quad L^2(0,T;
L^2_{\mathrm{loc}}(\mathbb{R}^d)).
\end{align*}

On the other hand, the uniform boundedness of $q^\epsilon$ in $C([0,T],H^s(\mathbb{R}^d))$
and an interpolation argument yield that
\begin{align*}
q^\epsilon \rightarrow 0 \quad \text{strongly in}
 \quad    L^2(0,T;  H^{s'}_{\mathrm{loc}}(\mathbb{R}^d))\ \ \text{for all} \ \  s'<s.
\end{align*}
Similarly, we can obtain the  convergence of  $\dv u^\epsilon$.
\end{proof}

We continue our proof of Theorem \ref{mth}. From Proposition
\ref{LC}, we know that
\begin{align*}
\dv\, \u^\epsilon \rightarrow \dv\,\v\quad \mathrm{in} \quad L^2(0,T;
H^{s'-1}_{\mathrm{loc}}(\mathbb{R}^d)).
\end{align*}
Hence, from \eqref{cac} it follows that
\begin{align*}
 \u^\epsilon \rightarrow \v\quad \mathrm{in} \quad L^2(0,T;
H^{s'}_{\mathrm{loc}}(\mathbb{R}^d))\qquad\mbox{for all }s'<s.
\end{align*}
By \eqref{cab}, \eqref{cad} and Proposition \ref{LD}, we obtain
\begin{equation*}
\begin{array}{ccl}
r^\epsilon(S^\epsilon, \epsilon q^\epsilon) \rightarrow r_0(\bar S)
& \mathrm{in} &  L^\infty(0,T; L^\infty(\mathbb{R}^d));\\
\nabla \u^\epsilon\rightarrow\nabla\v & \mathrm{in}
& L^2(0,T; H^{s'-1}_{\mathrm{loc}}(\mathbb{R}^d));\\
\nabla \H^\epsilon\rightarrow \nabla  \bar \H    & \mathrm{in} &
L^2(0,T; H^{s'-1}_{\mathrm{loc}}(\mathbb{R}^d)).
\end{array}
\end{equation*}
Passing to the limit in the equations for $S^\epsilon$ and
$\H^\epsilon$, we see that the limits $\bar{S}$ and $\bar \H$
satisfy
\begin{align*}
  \partial_t \bar{S} +(\v \cdot \nabla) \bar{S} =0, \quad
   \partial_t \bar{\H}  + ( {\v}  \cdot \nabla) \bar{\H}
   - ( \bar{\H} \cdot \nabla) {\v}  =0
\end{align*}
in the sense of distributions. Since   $r^\epsilon(S^\epsilon,
\epsilon q^\epsilon)-r_0(S^\epsilon)=O(\epsilon)$, we have
\begin{align*}
(r^\epsilon(S^\epsilon, \epsilon
q^\epsilon)-r_0(S^\epsilon))(\partial_t
\u^\epsilon+(\u^\epsilon\cdot \nabla)\u^\epsilon)\rightarrow 0 ,
\end{align*}
whence,
\begin{align*}
r^\epsilon ( S^\epsilon, \epsilon q^\epsilon)(\partial_t
\u^\epsilon +(\u^\epsilon\cdot \nabla)\u^\epsilon )
 =\,& (r^\epsilon(S^\epsilon, \epsilon q^\epsilon)-r_0(S^\epsilon))(\partial_t
\u^\epsilon+(\u^\epsilon\cdot \nabla)\u^\epsilon)\nonumber\\
& +\partial_t(r_0(S^\epsilon)\u^\epsilon)+(\u^\epsilon \cdot \nabla)(r_0(S^\epsilon)\u^\epsilon)\\
\rightarrow &\, r_0(\bar S)(\partial_t \v +(\v\cdot \nabla)\v)
\end{align*}
in the sense of distributions.

Applying the operator \emph{curl} to
the momentum equation \eqref{nal} and taking to the limit, we find that
\begin{align*}
\cu\big( r_0(\bar{S})(\partial_t \v+\v\cdot \nabla \v)
 -(\cu\bar{\H}) \times \bar{\H}-\mu\Delta \v \big)=0.
\end{align*}
Therefore, by the fact that $\cu \nabla =0$, the limit
$(\bar S,\v ,\bar \H)$ satisfies
\begin{align}
&   r(\bar{S},0)(\partial_t \v+(\v\cdot \nabla) \v)
  -(\cu\bar{\H}) \times \bar{\H}-\mu\Delta \v +\nabla \pi =0,  \label{cba} \\
&  \partial_t \bar{\H}  + ( {\v}  \cdot \nabla) \bar{\H}
   - ( \bar{\H} \cdot \nabla) {\v} =0,  \label{cbb} \\
 &\partial_t \bar{S} +(\v \cdot \nabla) \bar{S} =0,  \label{cbc} \\
& \dv \v=0,  \quad \dv \bar{\H} =0\label{cbd}
\end{align}
for some function $\pi$.

If we employ the same arguments as in the proof
of Theorem 1.5 in \cite{MS01}, we find that $(\bar S, \v, \bar
\H)$ satisfies the initial conditions \eqref{nax}. Moreover, the
standard iterative method shows that the system \eqref{cba}--\eqref{cbd}
with initial data \eqref{nax} has a unique solution
$(S^*, \v^*, \H^*)\in C([0,T],H^s(\mathbb{R}^d)).$ Thus, the uniqueness of
solutions to the limit system \eqref{cba}--\eqref{cbd} implies that
the above convergence results hold for
the full sequence of $(S^\epsilon, q^\epsilon, \u^\epsilon,
\H^\epsilon)$. Thus, the proof is  completed.
\end{proof}

\section{Compressible non-isentropic MHD equations with infinite Reynolds number}

In the study of magnetohydrodynamics, for some
local processes in the cosmic system, the effect of the magnetic diffusion will become
very important, see \cite{Hu87}. Moreover, when the Reynolds number
of a fluid is very high and the temperature changes very slowly,
we can ignore the viscosity and the heat conductivity of the fluid in the MHD equations.
 In such situation, the compressible MHD equations in the non-isentropic case take the form:
\begin{align}
&\partial_t\rho +\dv(\rho\u)=0, \label{rnaa} \\
&\partial_t(\rho\u)+\dv\left(\rho\u\otimes\u\right)+\nabla p
  =(\cu \H)\times \H, \label{rnab} \\
&\partial_t\E+\dv\left(\u(\E'+p)\right) =\dv\big[(\u\times\H)\times\H+\nu \H\times (\cu \H)\big],
  \label{rnac} \\
&\partial_t\H-\cu(\u\times\H)=-\cu (\nu\,\cu\H),\quad
\dv\H=0.\label{rnad}
\end{align}
As before here  $\rho$ denotes the density, $\u\in \R^d$ ($d=2,3$) the velocity,
$\H\in \R^d$ the magnetic field; $\E$ the total energy given by
$\E=\E'+|\H|^2/2$  and $\E'=\rho\left(e+|\u|^2/2\right)$ with $e$ being
the internal energy, $\rho|\u|^2/2$ the kinetic energy, and
$|\H|^2/2$ the magnetic energy. The equations of state
$p=p(\rho,\theta)$ and  $e=e(\rho,\theta)$ relate the pressure $p$
and the internal energy $e$ to the density $\rho$ and the temperature
$\theta$. The constant  $\nu>0$ is the magnetic
diffusivity acting as a magnetic diffusion coefficient of the
magnetic field.

Using the Gibbs relation \eqref{gibbs}
and the  identities \eqref{naeo} and \eqref{nae},
the equation of energy conservation \eqref{rnac} can be replaced by
\begin{equation}\label{rnaf}
\partial_t(\rho S)+\dv(\rho  S\u)=\frac{\nu}{\theta}|\cu \H|^2,
\end{equation}
where $S$ denotes the entropy.

As in Section 1, we reconsider the equations of state
as functions of $S$ and $p$, i.e., $\rho =R(S,p)$ and
$\theta=\Theta(S,p)$ for some positive smooth functions $R$ and
$\Theta$ defined for all $S$ and $p>0$, and satisfying
$\partial R/\partial p >0$.  Then, by utilizing \eqref{naa}
together with the constraint $\dv {\H}=0$, the system
\eqref{rnaa}, \eqref{rnab}, \eqref{rnad} and \eqref{rnaf} can be rewritten as
\begin{align}
    & A(S,p)(\partial_t p+(\u\cdot \nabla) p)+\dv \u=0,\label{rnag}\\
& R(S,p)(\partial_t \u+(\u\cdot \nabla) \u)+\nabla p = (\cu \H)\times \H, \label{rnah}\\
&  \partial_t {\H} -\cu(\u\times\H)=-\cu (\nu\,\cu\H), \quad \dv \H=0, \label{rnai}\\
 & R(S,p)\Theta(S,p)(\partial_tS+(\u\cdot \nabla) S)={\nu}|\cu \H|^2,\label{rnaj}
\end{align}
where $A(S,p)$ is defined by \eqref{asp}.
By introducing the dimensionless parameter $\epsilon$,
 and making the following changes of variables:
\begin{gather*}
    p (x, t)=p^\epsilon (x,\epsilon t), \quad S (x, t)=S^\epsilon (x,\epsilon t), \\
    {\u} (x,t)=\epsilon \u^\epsilon(x,\epsilon t), \;\;\;
   {\H} (x,t)=\epsilon \H^\epsilon(x,\epsilon t),\;\;\;
   \nu=\epsilon\,\mu^\epsilon,
 \end{gather*}
and $p^\epsilon (x, \epsilon t)=\underline{p} e^{\epsilon
q^\epsilon(x,\epsilon t)}$ for some positive constant $\underline{p}$, the system
 \eqref{rnag}--\eqref{rnaj} can be rewritten as
\begin{align}
& a^\epsilon(S^\epsilon,\epsilon q^\epsilon)(\partial_t q^\epsilon+(\u^\epsilon\cdot\nabla)q^\epsilon)
    +\frac{1}{\epsilon}\dv \u^\epsilon=0,\label{rnak}\\
& r^\epsilon(S^\epsilon,\epsilon q^\epsilon)(\partial_t
\u^\epsilon+(\u^\epsilon\cdot \nabla)
\u^\epsilon)+\frac{1}{\epsilon}\nabla q^\epsilon
 =  ( \cu{\H^\epsilon}) \times {\H^\epsilon},  \label{rnal}\\
&  \partial_t {\H}^\epsilon -\cu(\u^\epsilon\times\H^\epsilon)-\mu^\epsilon \Delta \H^\epsilon=0,
\quad \dv \H^\epsilon=0, \label{rnam}\\
 &b^\epsilon(S^\epsilon,\epsilon q^\epsilon)(\partial_tS^\epsilon+(\u^\epsilon\cdot \nabla)S^\epsilon)
 =\epsilon^2{\mu}^\epsilon|\cu \H^\epsilon|^2,\label{rnan}
\end{align}
where we have used the identity
$
 \cu \cu \H^\epsilon=\nabla \dv \H^\epsilon-\Delta \H^\epsilon, $
  the constraint $\dv \H^\epsilon=0$,  and the abbreviations \eqref{nann} and \eqref{nano}.

Formally, we obtain from \eqref{rnak} and \eqref{rnal} that $\nabla
q^\epsilon \rightarrow 0$ and $\dv \u^\epsilon=0$ as $\epsilon
\rightarrow 0$. Applying the operator \emph{curl} to \eqref{rnal},
using the fact that $\cu \nabla =0$,  and letting $\epsilon\rightarrow 0$, we obtain
\begin{align*}
\cu\big( r(\bar{S},0)(\partial_t \v+\v\cdot \nabla \v)
 -(\cu\bar{\H}) \times \bar{\H} \big)=0,
\end{align*}
where we have assumed that
$(S^\epsilon,q^\epsilon,\u^\epsilon,\H^\epsilon)$ and
$r^\epsilon(S^\epsilon,\epsilon q^\epsilon)$ converge to
$(\bar{S},0,\v,\bar{\H})$ and $r(\bar S,0)$ in some sense, respectively.
Finally, Letting $\mu^\epsilon\rightarrow \mu>0$ and applying the identity \eqref{naff},
we expect to get the following incompressible MHD equations:
\begin{align}
&   r(\bar{S},0)(\partial_t \v+(\v\cdot \nabla) \v)
  -(\cu\bar{\H}) \times \bar{\H} +\nabla \hat\pi =0, \label{rnao} \\
&  \partial_t \bar{\H}  + ( {\v}  \cdot \nabla) \bar{\H}
   - ( \bar{\H} \cdot \nabla) {\v} -\mu\Delta \bar\H =0, \label{rnap} \\
 &\partial_t \bar{S} +(\v \cdot \nabla) \bar{S} =0, \label{rnaq}\\
& \dv\,\v=0,  \quad \dv \bar{\H} =0  \label{rnar}
\end{align}
for some function $\hat\pi$.

We supplement the system \eqref{nak}--\eqref{nan} with initial conditions
\begin{align}
(S^\epsilon,q^\epsilon,\u^\epsilon,\H^\epsilon)|_{t=0}
=(S^\epsilon_0,q^\epsilon_0,\u^\epsilon_0,\H^\epsilon_0). \label{rnas}
\end{align}
The main result of this section reads as follows.
\begin{thm}\label{NTa}
 Let $s> d/2+2$ be an integer and $\mu^\epsilon \rightarrow \mu>0$.
 For any constant $M_0>0$, there is a positive constant
 $T=T(M_0)$, such that for all $\epsilon \in (0,1]$ and any initial data
$(S^\epsilon_0,q^\epsilon_0,\u^\epsilon_0,\H^\epsilon_0)$ satisfying
\begin{align}\label{rnat}
\|(S^\epsilon_0,q^\epsilon_0,\u^\epsilon_0,\H^\epsilon_0)\|_{H^s(\R^d)}\leq M_0,
\end{align}
the Cauchy problem \eqref{rnak}--\eqref{rnan}, \eqref{rnas} has a unique
solution $(S^\epsilon,q^\epsilon,\u^\epsilon,\H^\epsilon)\in C^0([0,T],H^s(\R^d))$,
satisfying that for all $\epsilon \in (0,1]$ and $t\in [0,T]$,
\begin{gather}
 \|\big(S^\epsilon,q^\epsilon,\u^\epsilon,\H^\epsilon\big)(t)\|_{H^s(\R^d)}\leq N
 \quad\mbox{for some constant }N=N(M_0)>0. \label{rnau}
\end{gather}
Moreover, suppose further that the initial data
$(S^\epsilon_0,q^\epsilon_0,\u^\epsilon_0,\H^\epsilon_0)$ converge strongly
in $H^s(\R^d)$ to $(S_0, 0,\v_0, \H_0)$ and $S^\epsilon_0 $ decays sufficiently
rapidly at infinity in the sense that
\begin{equation}\label{rnaw}
|S^\epsilon_0(x)-\underline{S}\,\, |\leq   {N}_0 |x|^{-1-\iota}, \quad
|\nabla S^\epsilon_0(x)|\leq  N_0 |x|^{-2-\iota}
\end{equation}
for all $\epsilon  \in (0,1]$ and some positive constants
$\underline S$, $N_0$ and $\iota$.
  Then, $(S^\epsilon,q^\epsilon,\u^\epsilon,\H^\epsilon)$ converges weakly in
  $L^\infty(0,T; H^s(\R^d))$
and strongly in $L^2(0,T;H^{s'}_{\mathrm{loc}}(\R^d))$ to a limit
$(\bar S,0,\v,\bar\H)$ for all $s'<s$. Moreover, $(\bar S,\v,\bar{\H})$ is the unique
  solution in $C([0,T],$ $H^s(\R^d))$ to the system \eqref{rnao}--\eqref{rnar} with initial
data $(\bar S,\v,\bar{\H})|_{t=0}=(S_0,\w_0,\H_0)$, where $\w_0\in H^s(\R^d)$
 is determined by
\begin{equation}\label{rnax}
   \dv\,\w_0=0,\,\; \cu(r(S_0,0)\w_0)=\cu(r(S_0,0)\v_0),
\;\, r(S_0,0):= \lim_{\epsilon \rightarrow 0}r^\epsilon(S^\epsilon_0,0).
\end{equation}
The function $\hat \pi\in C([0,T]\times \R^d)$ satisfies $\nabla\hat \pi\in C([0,T],H^{s-1}(\R^d))$.
\end{thm}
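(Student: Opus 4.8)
The plan is to mirror the two-step strategy of Theorem~\ref{mth} — uniform a priori estimates followed by the passage to the limit — but with the roles of the fluid viscosity and the magnetic diffusivity interchanged, since in the system \eqref{rnak}--\eqref{rnan} the momentum equation \eqref{rnal} is now purely hyperbolic while the magnetic equation \eqref{rnam} is parabolic. Accordingly, I would redefine the working quantity as $\mathcal{M}_\epsilon(T):=\mathcal{N}_\epsilon(T)^2+\int_0^T\|\H^\epsilon\|_{s+1}^2\,d\tau$, with $\mathcal{N}_\epsilon(T)$ as in \eqref{nbbbb}, and aim to establish the analogue of \eqref{nbc}, namely $\mathcal{M}_\epsilon(T)\le C_0+(T+\epsilon)C(\mathcal{M}_\epsilon(T))$, from which \eqref{rnau} and, through the hyperbolic--parabolic local existence theory of Vol'pert and Khudiaev \cite{VK}, the existence statement follow by the usual continuation argument.

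For the a priori estimate I would reproduce Lemmas~\ref{NLa}--\ref{NLk} under the substitution $\int\|\u^\epsilon\|_{s+1}^2\leftrightarrow\int\|\H^\epsilon\|_{s+1}^2$. The entropy estimate is immediate: the right-hand side $\epsilon^2\mu^\epsilon|\cu\H^\epsilon|^2$ of \eqref{rnan} now plays the role of $\epsilon^2\Psi^\epsilon:\nabla\u^\epsilon$, so the Moser estimate yields $\|h_\alpha\|\le C(\mathcal{N}_\epsilon(T))+C(\mathcal{N}_\epsilon(T))\|\H^\epsilon\|_{s+1}$ and hence the bound \eqref{nbe} with $\mathcal{M}_\epsilon$ as redefined. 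The basic $L^2$ bound of Lemma~\ref{NLb} goes through verbatim, the singular $1/\epsilon$ terms again cancelling and the coupling terms vanishing by \eqref{nae}; the only change is that the dissipation now produces $\int_0^t\|\nabla\H^\epsilon\|^2\,d\tau$ rather than $\int_0^t\|\nabla\u^\epsilon\|^2\,d\tau$. The Helmholtz-type inequalities of Lemma~\ref{NLi} are unaffected, as they concern only $\U^\epsilon=(q^\epsilon,\u^\epsilon)^\top$ and the elliptic structure of $L(\partial_x)$.

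The decisive step is the higher-order energy estimate for the parabolic magnetic field, which replaces the viscosity-driven Lemma~\ref{NLee}. Differentiating \eqref{rnam} by $\partial_x^\alpha$, $|\alpha|\le s$, and testing against $\partial_x^\alpha\H^\epsilon$ produces the dissipation $\mu^\epsilon\|\nabla\H^\epsilon\|_s^2$, against which the coupling term $\cu(\u^\epsilon\times\H^\epsilon)=(\H^\epsilon\cdot\nabla)\u^\epsilon-(\u^\epsilon\cdot\nabla)\H^\epsilon-\H^\epsilon\,\dv\u^\epsilon$ must be controlled. The naive bound on $\partial_x^\alpha[(\H^\epsilon\cdot\nabla)\u^\epsilon]$ would require $\|\u^\epsilon\|_{s+1}$, which is \emph{not} available since there is no fluid viscosity; the point — and the analogue of the derivative transfer of the main theorem, now run in the opposite direction — is that $\dv\H^\epsilon=0$ lets one integrate the worst term $\langle(\H^\epsilon\cdot\nabla)\partial_x^\alpha\u^\epsilon,\partial_x^\alpha\H^\epsilon\rangle$ by parts, the contribution containing $\dv\H^\epsilon$ dropping, leaving $-\langle H^\epsilon_j\,\partial_x^\alpha\u^\epsilon,\partial_j\partial_x^\alpha\H^\epsilon\rangle$, bounded by $\|\H^\epsilon\|_{L^\infty}\|\u^\epsilon\|_s\|\nabla\H^\epsilon\|_s$ and absorbed into the dissipation by Young's inequality. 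This yields $\|\H^\epsilon(t)\|_s^2+\frac{\mu}{2}\int_0^t\|\nabla\H^\epsilon\|_s^2\,d\tau\le C_0+tC(\mathcal{M}_\epsilon(T))$. In the accompanying acoustic estimate (the analogue of Lemmas~\ref{NLee} and \ref{NLg}), the source $(\cu\H^\epsilon)\times\H^\epsilon$ carries $s+1$ derivatives of $\H^\epsilon$, but since $\|\H^\epsilon\|_{s+1}$ is now controlled, a plain Cauchy--Schwarz/Young step suffices to bound this contribution by $\eta\int_0^t\|\H^\epsilon\|_{s+1}^2\,d\tau$, with no integration by parts onto $\u^\epsilon$ needed.

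Finally, as in Lemma~\ref{NLk}, I would combine these inequalities: using $\dv\H^\epsilon=0$ one has $\|\H^\epsilon\|_{s+1}\le C(\|\H^\epsilon\|_s+\|\cu\H^\epsilon\|_s)$, so the magnetic dissipation $\int_0^t\|\nabla\H^\epsilon\|_s^2$ controls $\int_0^t\|\H^\epsilon\|_{s+1}^2$ modulo lower order, allowing the small parameter $\eta$ to be absorbed; induction on $\sigma\in\{0,\dots,s\}$ then closes $\mathcal{M}_\epsilon(T)\le C_0+(T+\epsilon)C(\mathcal{M}_\epsilon(T))$. For the convergence part I would repeat Section~4: the acoustic equation \eqref{cag} has the same form, with $F^\epsilon$ now lacking the $\dv\Psi^\epsilon$ term but retaining an overall factor $\epsilon$, so $F^\epsilon\to0$ in $L^2(0,T;L^2(\R^d))$ and Lemma~\ref{LD} forces $q^\epsilon\to0$ and $\dv\u^\epsilon\to0$; passing to the limit, the surviving diffusion $\mu^\epsilon\Delta\H^\epsilon$ produces the term $\mu\Delta\bar\H$ in \eqref{rnap}, while the momentum limit \eqref{rnao} has no Laplacian, and uniqueness for \eqref{rnao}--\eqref{rnar} upgrades the convergence to the full sequence. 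The main obstacle is precisely the parabolic magnetic estimate above: one must verify that every top-order occurrence of $\u^\epsilon$ arising from $\cu(\u^\epsilon\times\H^\epsilon)$ can be reduced to order $s$ by means of $\dv\H^\epsilon=0$, since unlike the main theorem there is no fluid dissipation available as a fallback for $\u^\epsilon$.
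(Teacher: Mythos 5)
Your proposal is correct and follows essentially the same route as the paper, which for Theorem \ref{NTa} only gives a sketch: redefine $\mathcal{M}_\epsilon(T)$ with $\int_0^T\|\H^\epsilon\|_{s+1}^2\,d\tau$ exactly as in \eqref{rnbb} and rerun the proof of Theorem \ref{prop} with the magnetic diffusion now playing the role of the fluid viscosity. Your additional detail — closing the top-order magnetic estimate by integrating $\langle(\H^\epsilon\cdot\nabla)\partial_x^\alpha\u^\epsilon,\partial_x^\alpha\H^\epsilon\rangle$ by parts via $\dv\H^\epsilon=0$ and absorbing the result into the dissipation, then bounding the Lorentz-force contributions in the acoustic and vorticity estimates by $\eta\int_0^t\|\H^\epsilon\|_{s+1}^2\,d\tau$ — is a correct filling-in of what the paper leaves implicit.
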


\begin{proof}[Sketch of the proof of Theorem \ref{NTa}]
As explained before, the main step is to establish the uniform estimate \eqref{rnau}.
For this purpose, we define $\mathcal{M}_\epsilon(T)$ as follows
 \begin{align}
\mathcal{M}_\epsilon(T):=\,&  {\mathcal{N}_\epsilon(T)}^2
+\int_0^T\|\H^\epsilon\|_{s+1}^2d\tau,\label{rnbb}
\end{align}
where $\mathcal{N}_\epsilon (T)$ is defined by \eqref{nbbbb}. By arguments similar to
those used in the proof of Theorem \ref{prop}, one can obtain the desired estimate.
Indeed, the arguments are  easier since one can use the magnetic diffusion term
 to control the terms involving $\H$ in the momentum equations, and therefore we omit
 the details here.  \end{proof}

\medskip
 \noindent
{\bf Acknowledgements:}  The authors would like to thank Prof.
Fanghua Lin for suggesting this problem and for helpful discussions.
This work was partially done when Li  visited the Institute of
Applied Physics and Computational Mathematics in Beijing. He would
like to thank the institute for hospitality. Jiang was supported by
the National Basic Research Program under the Grant 2011CB309705
 and NSFC (Grant No. 40890154). Ju was supported by
NSFC (Grant No. 40890154, 11171035). Li was supported  by NSFC (Grant No.
10971094), PAPD, and the Fundamental Research Funds for the
Central Universities.


\end{document}